\documentclass[11pt]{amsart}

\usepackage{amsmath,amssymb}

\usepackage{amsfonts,amscd,amsxtra,calc,latexsym}

\usepackage[mathscr]{eucal}

\usepackage{ascmac,framed}

\usepackage{bm}

\usepackage{graphicx}

\usepackage[all]{xy}

\def\Spec{\mathrm{Spec}}
\def\Spf{\mathrm{Spf}}
\def\Hom{\mathrm{Hom}}
\def\Im{\mathrm{Im}}
\def\Ker{\mathrm{Ker}}
\def\Ext{\mathrm{Ext}}
\def\Coker{\mathrm{Coker}}
\def\mod{\mathrm{mod}}
\def\exp{\mathrm{exp}}

\def\ZZ{\mathbb{Z}}
\def\QQ{\mathbb{Q}}

\def\FF{\mathbb{F}}
\def\GG{\mathbb{G}}
\def\AA{\mathbb{A}}

\def\UU{\mathbb{U}}

\def\uu{\boldsymbol{u}}
\def\vv{\boldsymbol{v}}
\def\ww{\boldsymbol{w}}
\def\xx{\boldsymbol{x}}

\def\zz{\boldsymbol{z}}
\def\aa{\boldsymbol{a}}

\def\gg{\mathcal{G}}

\setlength{\textheight}{21.5cm}
\setlength{\textwidth}{15cm}
\setlength{\headsep}{-0.5cm}
\setlength{\footskip}{1.0cm}
\calclayout

\pagestyle{plain}

\theoremstyle{plain}
\newtheorem{theorem}{Theorem}[section]
\newtheorem{remark}[theorem]{Remark}

\newtheorem{lemma}[theorem]{Lemma}
\newtheorem{corollary}[theorem]{Corollary}

\newtheorem{convention}[theorem]{Convention}

\theoremstyle{definition}

\numberwithin{equation}{section}
\numberwithin{table}{section}
\numberwithin{figure}{section}

\allowdisplaybreaks

\begin{document}

\title{On the injectivity of certain homomorphisms between extensions of $\widehat{\gg}^{(\lambda)}$ by $\widehat{\GG}_m$ over a $\ZZ_{(p)}$-algebra}
\author{Michio Amano}
\date{\today}
\address{School of Education, Meisei University, 2-1-1 Hodokubo Hino, Tokyo 191-8506, Japan}
\email{michio.amano@meisei-u.ac.jp}
\keywords{Sekiguchi-Suwa~theory, Hochschild cohomology, Witt vectors, finite group schemes.}
\subjclass[2020]{Primary~14L15, Secondary~13F35}

\begin{abstract}
Let $\widehat{\gg}^{(\lambda)}$ be a formal group scheme which deforms $\widehat{\GG}_a$ to $\widehat{\GG}_m$. And let $\psi^{(l)}:\widehat{\gg}^{(\lambda)}\rightarrow\widehat{\gg}^{(\lambda^{p^l})}$ be the $l$-th Frobenius-type homomorphism determined by $\lambda$. We show that the homomorphism $(\psi^{(l)})^\ast:H^2_0(\widehat{\gg}^{(\lambda^{p^l})},\widehat{\GG}_m)\rightarrow H^2_0(\widehat{\gg}^{(\lambda)},\widehat{\GG}_m)$ induced by $\psi^{(l)}$ is injective over a $\ZZ_{(p)}$-algebra under a suitable restriction on $\lambda$. In this situation, the Cartier dual of $\Ker(\psi^{(l)})$, which is a finite group scheme of order $p^l$, is described over a $\ZZ/(p^n)$-algebra.
\end{abstract}

\maketitle

\section{Introduction}

Throughout this paper, we denote by $p$ a prime number. Let $A$ be a $\ZZ_{(p)}$-algebra and $\lambda$  a suitable element of $A$, where $\ZZ_{(p)}$ is a localization of rational integers $\ZZ$ at $p$. The group scheme $\gg^{(\lambda)}=\Spec\, A[X,1/(1+\lambda X)]$ which deforms the additive group scheme $\GG_a$  (in the case $\lambda=0$) to the multiplicative group scheme $\GG_m$ (in the case $\lambda\in A^\times)$ has been constructed by W.~Waterhouse and B.~Weisfeiler~\cite{WW}. Then $\gg^{(\lambda)}$ has been studied in detail independently by F.~Oort, T.~Sekiguchi and N.~Suwa~\cite{SOS} and W.~Waterhouse~\cite{W} for the purpose of unifying Artin-Schreier theory and Kummer theory. 

Let $l$ be a positive integer. For any $1\leq i\leq p^l$, there exist $k_i$ and $r_i$ uniquely such that $p^{k_i}\leq i < p^{k_i+1}$ and $i=p^{k_i}+r_i$. Note that $0\leq k_i \leq l$ for any $i=1,2,\ldots,p^l$. For each integer $0 \leq k \leq l-1$, we take $\nu_k\in A$ such that
\begin{align}
p^{l-k}\lambda^{p^k}=\nu_k\lambda^{p^l}.
\end{align}
Put $\alpha_i:=\displaystyle\binom{p^l}{i}p^{-(l-k_i)}\lambda^{r_i}\nu_{k_i}$, and set
\begin{align*}
\psi^{(l)}(X):=\displaystyle\sum_{i=1}^{p^l-1}\alpha_iX^i+X^{p^l}\in A[X].
\end{align*}
Note that, if $\lambda$ is not a zero divisor, then we can write it as
\begin{align*}
\psi^{(l)}(X)=\dfrac{1}{\lambda^{p^l}}\left\{(1+\lambda X)^{p^l}-1\right\}.
\end{align*}

\begin{remark}
{\rm
Let $\zeta_l$ be a primitive $p^l$-th root of unity. Put $\zeta_{l-k}:=\zeta_l^{p^k}$ for $1\leq k\leq l-1$. Set $A:=\ZZ_{(p)}[\zeta_l]$. For the equality
\begin{align*}
T^{p^k-1}+T^{p^k-2}+\cdots+T+1=(T-\zeta_{k})(T-\zeta_{k}^2)\cdots(T-\zeta_{k}^{p^{k}-1}),
\end{align*}
we substitute $T=1$. Then the equalities
\begin{align*}
p^{k}&=(1-\zeta_{k})^{p^{k}-1}(1+\zeta_{k})(1+\zeta_{k}+\zeta_{k}^2)\cdots(1+\zeta_{k}+\cdots+\zeta_{k}^{p^{k}-2})\\
       &=(1-\zeta_l^{p^{l-k}})^{p^{k}-1}\prod_{i=1}^{p^{k}-2}\left(1+\sum_{j=1}^{i}\zeta_{k}^j\right)\\
       &=(1-\zeta_l)^{p^{k}-1}\left(1+\sum_{i=1}^{p^{l-k}-1}\zeta_l^i\right)^{p^{k}-1}\prod_{i=1}^{p^{k}-2}\left(1+\sum_{j=1}^{i}\zeta_{k}^j\right)
\end{align*}
are obtained in $A$. Hence, when $\lambda:=1-\zeta_l\in A$, we have $p^{k}=u_{k}\lambda^{p^{k}-1}$ for $1\leq k\leq l$. Put $u_0:=1$. Therefore, for $0\leq k\leq l-1$, we can take $\nu_k:=u_l/u_k\in A$ satisfying $p^{l-k}\lambda^{p^k}=\nu_k\lambda^{p^l}$. Note that $\nu_{l-1}$ is a unit in $A$.
}
\end{remark}

For a group scheme $G$, we denote by $\widehat{G}$ the formal completion along the zero section. Since $\psi^{(l)}(X)$ satisfies $\lambda^{p^l}\psi^{(l)}(X)=(1+\lambda X)^{p^l}-1$ for any $\lambda\in A$, for an endomorphism
\begin{align*}
\varphi:\widehat{\GG}_{m,A} \rightarrow\widehat{\GG}_{m,A};\ t\mapsto \varphi(t)=t^{p^l},
\end{align*}
we obtain the commutative diagram
\begin{align*}
\begin{CD}
\widehat{\gg}^{(\lambda)}       @>{\alpha^{(\lambda)}}>>         \widehat{\GG}_{m,A} \\
                                @V{\psi^{(l)}}VV                 @VV{\varphi}V       \\
\widehat{\gg}^{(\lambda^{p^l})} @>{\alpha^{(\lambda^{p^l})}}>>   \widehat{\GG}_{m,A}, 
\end{CD}
\end{align*}
where $\alpha^{(\lambda)}(x)=1 + \lambda x$. Then
\begin{align*}
\psi^{(l)}:\widehat{\gg}^{(\lambda)}\rightarrow\widehat{\gg}^{(\lambda^{p^l})};\ x\mapsto\psi^{(l)}(x)
\end{align*}
is a well-defined surjective homomorphism. In particular, when $\lambda=0$, we can choose $\nu_k=0$ for any $k$, so
\begin{align*}
\psi^{(l)}:\widehat{\GG}_{a,A}\rightarrow\widehat{\GG}_{a,A};\ x\mapsto \psi^{(l)}(x)=x^{p^l}
\end{align*}
is nothing but the $l$-th Frobenius endomorphism over an $\FF_p$-algebra. We denote by $H^2_0(G,H)$ the Hochschild cohomology group consisting of symmetric $2$-cocycles of $G$ with coefficients in $H$ for formal group schemes $G$ and $H$ (for details, see \cite[Chap.~II.3 and Chap.~III.6]{DG}). $H^2_0(\widehat{\gg}^{(\lambda)},\widehat{\GG}_{m,A})$ has been explicitly described by T.~Sekiguchi and N.~Suwa \cite[Theorem 2.19.1.]{SS1}. More precisely, the isomorphism
\begin{align*}
\Coker[F^{(\lambda)}:W(A)\rightarrow W(A)]\xrightarrow{\sim} H^2_0(\widehat{\gg}^{(\lambda)},\widehat{\GG}_{m,A});\ \vv\mapsto F_p(\vv,\lambda;X,Y)
\end{align*}
has been constructed. Here $F_p(\vv,\lambda;X,Y)$ is a formal power series introduced in \cite{SS1}. We consider the homomorphism
\begin{align*}
(\psi^{(l)})^\ast:H^2_0(\widehat{\gg}^{(\lambda^{p^l})},\widehat{\GG}_{m,A})\rightarrow H^2_0(\widehat{\gg}^{(\lambda)},\widehat{\GG}_{m,A})
\end{align*}
induced by $\psi^{(l)}:\widehat{\gg}^{(\lambda)}\rightarrow\widehat{\gg}^{(\lambda^{p^l})}$. By the author \cite[Lemma~4.]{A1}, $(\psi^{(l)})^\ast$ has already been calculated as the pull-back by $\psi^{(l)}$. And it is given by
\begin{align*}
(\psi^{(l)})^\ast:H^2_0(\widehat{\gg}^{(\lambda^{p^l})},\widehat{\GG}_{m,A})&\rightarrow H^2_0(\widehat{\gg}^{(\lambda)},\widehat{\GG}_{m,A});\\ F_p(\vv,\lambda^{p^l};X,Y)&\mapsto F_p(\vv,\lambda^{p^l};\psi^{(l)}(X),\psi^{(l)}(Y)).
\end{align*}
Since $F^{(\lambda^{p^l})}:W_A\rightarrow W_A$ is faithfully flat (\cite[Corollary 1.8.]{SS1}), by using Lemma~4.2 in subsection 4.1, we can take $\ww\in W(B)$ such that
\begin{align*}
(\psi^{(l)})^\ast F_p(\vv,\lambda^{p^l};X,Y)=F_p(\vv,\lambda^{p^l};\psi^{(l)}(X),\psi^{(l)}(Y))=F_p(F^{(\lambda)}\circ T_{\aa}(\ww),\lambda;X,Y)
\end{align*}
for an fppf morphism $A\to B$, where $\aa\in W(A)$ such that $T_{\aa}([\lambda^{p^l}])=p^l[\lambda]$ (see subsection~2.2 for the definition of $T_{\aa}$). 

\begin{lemma}
Assume that the first component $a_0$ of $\aa$ is a unit or $0$. If
\begin{align*}
F_p(F^{(\lambda)}\circ T_{\aa}(\ww),\lambda;X,Y)=1\in H^2_0(\widehat{\gg}^{(\lambda)},\widehat{\GG}_{m,A})
\end{align*}
then there exists $E_p(\xx,\lambda^{p^l};\psi^{(l)}(T))\in A[[T]]$ such that
\begin{align*}
F_p(F^{(\lambda)}\circ T_{\aa}(\ww),\lambda;X,Y)=
\dfrac{E_p(\xx,\lambda^{p^l};\psi^{(l)}(X))\cdot E_p(\xx,\lambda^{p^l};\psi^{(l)}(Y))}{E_p(\xx,\lambda^{p^l};\psi^{(l)}(X+Y+\lambda XY))}.
\end{align*}
\end{lemma}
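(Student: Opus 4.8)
The plan is to reduce the claimed factorization to the well-understood structure theory of $H^2_0$ for the deformation groups $\widehat{\gg}^{(\lambda)}$, as developed by Sekiguchi--Suwa. First I would recall that the isomorphism $\Coker[F^{(\lambda)}:W(A)\to W(A)]\xrightarrow{\sim}H^2_0(\widehat{\gg}^{(\lambda)},\widehat{\GG}_{m,A})$ means precisely that a symmetric $2$-cocycle $F_p(\vv,\lambda;X,Y)$ is a coboundary if and only if $\vv$ lies in the image of $F^{(\lambda)}$; and a coboundary in Hochschild cohomology is, by definition, of the form $g(X)g(Y)/g(X\ast_\lambda Y)$ for some $g\in 1+T A[[T]]$, where $X\ast_\lambda Y=X+Y+\lambda XY$ is the group law on $\widehat{\gg}^{(\lambda)}$. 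So the hypothesis $F_p(F^{(\lambda)}\circ T_{\aa}(\ww),\lambda;X,Y)=1$ in $H^2_0$ tells us two things: that $F^{(\lambda)}\circ T_{\aa}(\ww)$ is in the image of $F^{(\lambda)}$, and hence (after peeling off one $F^{(\lambda)}$, using that $a_0$ is a unit or $0$ so that $T_{\aa}$ is well-controlled on $W(A)$) that $T_{\aa}(\ww)$ is, up to the kernel of $F^{(\lambda)}$ or modulo something killed appropriately, again expressible via the relevant Artin--Hasse-type series.

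The key computational input is the explicit coboundary formula for $F_p$: one knows from \cite{SS1} that for $\xx\in W(A)$ one has an identity of the shape
\begin{align*}
F_p(F^{(\lambda)}(\xx),\lambda;X,Y)=\frac{E_p(\xx,\lambda;X)\,E_p(\xx,\lambda;Y)}{E_p(\xx,\lambda;X+Y+\lambda XY)},
\end{align*}
where $E_p(\xx,\lambda;T)$ is the twisted Artin--Hasse exponential (a unit in $A[[T]]$). The second key input is the intertwining relation coming from the commutative diagram with $\psi^{(l)}$ and $\varphi(t)=t^{p^l}$, together with the compatibility $T_{\aa}([\lambda^{p^l}])=p^l[\lambda]$: substituting $X\mapsto\psi^{(l)}(X)$, $Y\mapsto\psi^{(l)}(Y)$ into the $\lambda^{p^l}$-version of the coboundary formula, and using that $\psi^{(l)}$ is a homomorphism $\widehat{\gg}^{(\lambda)}\to\widehat{\gg}^{(\lambda^{p^l})}$ (so $\psi^{(l)}(X+Y+\lambda XY)=\psi^{(l)}(X)+\psi^{(l)}(Y)+\lambda^{p^l}\psi^{(l)}(X)\psi^{(l)}(Y)$), one sees that
\begin{align*}
F_p(F^{(\lambda^{p^l})}(\xx),\lambda^{p^l};\psi^{(l)}(X),\psi^{(l)}(Y))=\frac{E_p(\xx,\lambda^{p^l};\psi^{(l)}(X))\,E_p(\xx,\lambda^{p^l};\psi^{(l)}(Y))}{E_p(\xx,\lambda^{p^l};\psi^{(l)}(X+Y+\lambda XY))}.
\end{align*}
So the whole problem becomes: knowing that $F^{(\lambda)}\circ T_{\aa}(\ww)$ gives the trivial class, produce an $\xx\in W(A)$ with $F^{(\lambda^{p^l})}(\xx)$ matching, up to a coboundary over $\widehat{\gg}^{(\lambda)}$, the data $F^{(\lambda)}\circ T_{\aa}(\ww)$ pulled back — then set $E_p(\xx,\lambda^{p^l};\psi^{(l)}(T))$ to be the corresponding Artin--Hasse series and read off the identity. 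Concretely, I would first show $\xx$ can be chosen so that $F^{(\lambda)}\circ T_{\aa}(\ww)$ and the pull-back $(\psi^{(l)})^\ast$-image of $F^{(\lambda^{p^l})}(\xx)$ agree as honest power series (not merely in cohomology), using the triviality hypothesis to kill the discrepancy; this uses Lemma~4.2 and the faithful flatness of $F^{(\lambda^{p^l})}$ already quoted, plus the hypothesis on $a_0$ to invert/normalize $T_{\aa}$.

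The main obstacle I expect is bookkeeping the passage between "trivial in $H^2_0$" and "equal to an explicit coboundary with the $\psi^{(l)}$-substituted argument": a priori triviality only gives $g(X)g(Y)/g(X\ast_\lambda Y)$ for some $g\in 1+TA[[T]]$, and one must argue that $g$ itself factors through $\psi^{(l)}$, i.e. $g(T)=h(\psi^{(l)}(T))$ for some $h\in A[[T]]$, equivalently that $g$ is invariant under the $\Ker(\psi^{(l)})$-action. This is where the structure of $\psi^{(l)}$ as a Frobenius-type isogeny of degree $p^l$, and the condition that $a_0$ is a unit or $0$ (which separates the "$\lambda$ invertible, so $\widehat{\gg}^{(\lambda)}\cong\widehat{\GG}_m$" case from the "$\lambda$ nilpotent/zero" case and makes $T_{\aa}$ behave), gets used decisively; in the $\lambda\in A^\times$ case it amounts to the classical fact that a unit power series on $\widehat{\GG}_m$ that trivializes a cocycle pulled back along $t\mapsto t^{p^l}$ can be taken to be a $p^l$-th power, while in the $\lambda=0$ case it is the analogous statement for Frobenius on $\widehat{\GG}_a$, and the general $\ZZ_{(p)}$-algebra case is glued from these via the faithfully flat descent along $A\to B$. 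Once $g(T)=E_p(\xx,\lambda^{p^l};\psi^{(l)}(T))$ is identified, matching constants and normalizations finishes the proof.
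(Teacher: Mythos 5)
Your overall framework --- the Sekiguchi--Suwa isomorphism $\Coker(F^{(\lambda)})\simeq H^2_0$, the coboundary formula for $F_p(F^{(\lambda)}(\xx),\lambda;X,Y)$, and the substitution identity coming from $T_{\aa}([\lambda^{p^l}])=p^l[\lambda]$ --- matches the paper's, but the two steps where the hypothesis on $a_0$ actually enters are not carried out, and the plan you give for them would not work. First, you identify the case split ``$a_0$ a unit or $0$'' with the split ``$\lambda\in A^\times$ (so $\widehat{\gg}^{(\lambda)}\cong\widehat{\GG}_m$) versus $\lambda$ nilpotent or zero.'' These are independent conditions: $p^l\lambda=\nu_0\lambda^{p^l}$ with $\nu_0=a_0$ a unit does not force $\lambda$ to be a unit (take $l=1$ and $\lambda=1-\zeta_1$ over $\ZZ_{(p)}[\zeta_1]$, as in Remark~1.1), and $\nu_0=0$ is compatible with $\lambda\in A^\times$ (Remark~1.11). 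So your proposed reduction to ``classical facts'' about $\widehat{\GG}_m$ and $\widehat{\GG}_a$ does not cover the hypotheses of the lemma. Second, the fact you want to quote --- that a unit series $g$ with $g(X)g(Y)/g(X+Y+\lambda XY)=(\psi^{(l)})^\ast c$ can be modified so as to factor through $\psi^{(l)}$ --- is not an available input: the ambiguity in $g$ for fixed coboundary is exactly $\Hom(\widehat{\gg}^{(\lambda)},\widehat{\GG}_m)$, so that statement is equivalent to the conclusion being proved, and your ``main obstacle'' paragraph restates the goal rather than discharging it. Also, ``peeling off one $F^{(\lambda)}$'' from $F^{(\lambda)}\circ T_{\aa}(\ww)=F^{(\lambda)}(\uu)$ is not legitimate, since $F^{(\lambda)}$ has a nontrivial kernel.

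The paper never works with an arbitrary trivializing $g$; it stays entirely on the Witt-vector side. Triviality in $H^2_0$ is converted, via the isomorphism (4.2), into the existence of $\uu\in W(A)$ with $F^{(\lambda)}\circ T_{\aa}(\ww)=F^{(\lambda)}(\uu)$. If $a_0$ is a unit, $T_{\aa}$ is an automorphism of $W(A)$, so $\uu=T_{\aa}(\uu')$ with $\uu'\in W(A)$, and Lemma~4.2 gives $E_p(\uu',\lambda^{p^l};\psi^{(l)}(T))=E_p(\uu,\lambda;T)\in A[[T]]$; one takes $\xx=\uu'$. If $a_0=0$, one takes $\xx=\ww\in W(B)$ and must still show the series lies in $A[[T]]$: the first component of $T_{\aa}(\ww)$ is $a_0w_0=0$, so the fractional-exponent factor $(1+\lambda T)^{a_0w_0/\lambda}$ of $E_p(T_{\aa}(\ww),\lambda;T)$ disappears and the remaining exponents are $\Phi_{k-1}(F^{(\lambda)}(\uu))$ with $\uu\in W(A)$, whence $E_p(\ww,\lambda^{p^l};\psi^{(l)}(T))=E_p(T_{\aa}(\ww),\lambda;T)\in A[[T]]^\times$; a coefficientwise fppf-descent argument (Lemma~4.5) then places it in $A[[\psi^{(l)}(T)]]$. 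These two concrete mechanisms --- invertibility of $T_{\aa}$ when $a_0$ is a unit, and the vanishing of the $(1+\lambda T)^{a_0w_0/\lambda}$ factor plus descent when $a_0=0$ --- are the content your proposal is missing.
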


Our aim is to show that $(\psi^{(l)})^\ast$ is injective. This is shown by verifying that the preimage of the last equality in Lemma~1.2 is $1$ in $H^2_0(\widehat{\gg}^{(\lambda^{p^l})},\widehat{\GG}_{m,A})$ (see subsection~4.1 for details).

\begin{convention}
{\rm
Recall the elements $\nu_k$ of $A$ which we took so as satisfying (1.1). Note that they are related with the components $a_k$ of $\aa=(a_0,a_1,\dots)$.
 In particular, we have $p^l\lambda=\nu_0\lambda^{p^l}$ and $p^l\lambda=a_0\lambda^{p^l}$. So, we can and we do take so that $\nu_0=a_0$.
}
\end{convention}

The main result of this paper is:

\begin{theorem}
Let $A$ be a $\ZZ_{(p)}$-algebra. Assume that $\nu_0$ is a unit or $0$. With the above notations, the homomorphism
\begin{align*}
(\psi^{(l)})^\ast:
H^2_0(\widehat{\gg}^{(\lambda^{p^l})},\widehat{\GG}_{m,A})
\rightarrow
H^2_0(\widehat{\gg}^{(\lambda)},\widehat{\GG}_{m,A}).
\end{align*} 
is injective.
\end{theorem}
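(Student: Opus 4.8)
The plan is to reduce the injectivity of $(\psi^{(l)})^\ast$ to a concrete statement about formal power series, using the presentation of $H^2_0(\widehat{\gg}^{(\lambda)},\widehat{\GG}_{m,A})$ via the cokernel of $F^{(\lambda)}$ on Witt vectors together with Lemma~1.2. Concretely, suppose a class in $H^2_0(\widehat{\gg}^{(\lambda^{p^l})},\widehat{\GG}_{m,A})$ is killed by $(\psi^{(l)})^\ast$; write it as $F_p(\vv,\lambda^{p^l};X,Y)$ for some $\vv\in W(A)$. As recalled in the excerpt, after an fppf base change $A\to B$ we may write $(\psi^{(l)})^\ast F_p(\vv,\lambda^{p^l};X,Y)=F_p(F^{(\lambda)}\circ T_{\aa}(\ww),\lambda;X,Y)$ with $\ww\in W(B)$ and $T_{\aa}([\lambda^{p^l}])=p^l[\lambda]$; by Convention~1.3 the hypothesis ``$\nu_0$ is a unit or $0$'' is exactly the hypothesis ``$a_0$ is a unit or $0$'' needed to invoke Lemma~1.2. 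Since this class is trivial in $H^2_0(\widehat{\gg}^{(\lambda)},\widehat{\GG}_{m,A})$, Lemma~1.2 produces $E_p(\xx,\lambda^{p^l};\psi^{(l)}(T))\in A[[T]]$ (one should check it descends from $B$ to $A$, or simply work over $B$ and descend at the end) trivializing $F_p(F^{(\lambda)}\circ T_{\aa}(\ww),\lambda;X,Y)$ as a coboundary of the form $E_p(\psi^{(l)}(X))E_p(\psi^{(l)}(Y))/E_p(\psi^{(l)}(X+Y+\lambda XY))$.

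The key observation is that $\psi^{(l)}$ converts the group law of $\widehat{\gg}^{(\lambda)}$ into that of $\widehat{\gg}^{(\lambda^{p^l})}$, i.e. $\psi^{(l)}(X+Y+\lambda XY)=\psi^{(l)}(X)+_{\lambda^{p^l}}\psi^{(l)}(Y)$ where $+_{\mu}$ denotes $U+V+\mu UV$; this is immediate from $\lambda^{p^l}\psi^{(l)}(X)=(1+\lambda X)^{p^l}-1$. Hence if we set $G(U,V):=E_p(\xx,\lambda^{p^l};U)E_p(\xx,\lambda^{p^l};V)/E_p(\xx,\lambda^{p^l};U+V+\lambda^{p^l}UV)$, the displayed equality in Lemma~1.2 reads $F_p(F^{(\lambda)}\circ T_{\aa}(\ww),\lambda;X,Y)=G(\psi^{(l)}(X),\psi^{(l)}(Y))$. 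On the other hand $(\psi^{(l)})^\ast F_p(\vv,\lambda^{p^l};X,Y)=F_p(\vv,\lambda^{p^l};\psi^{(l)}(X),\psi^{(l)}(Y))$, so $F_p(\vv,\lambda^{p^l};\psi^{(l)}(X),\psi^{(l)}(Y))=G(\psi^{(l)}(X),\psi^{(l)}(Y))$. I would then argue that since $\psi^{(l)}:\widehat{\gg}^{(\lambda)}\to\widehat{\gg}^{(\lambda^{p^l})}$ is faithfully flat (it is surjective with kernel a finite flat group scheme of order $p^l$), the induced map on coordinate rings $A[[T]]\to A[[T]]$, $T\mapsto\psi^{(l)}(T)$, is injective; therefore $F_p(\vv,\lambda^{p^l};U,V)=G(U,V)$ as formal power series in $U,V$, i.e. $F_p(\vv,\lambda^{p^l};U,V)$ is already a coboundary $E_p(\xx,\lambda^{p^l};U)E_p(\xx,\lambda^{p^l};V)/E_p(\xx,\lambda^{p^l};U+V+\lambda^{p^l}UV)$. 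This says precisely that $\vv\in\Im[F^{(\lambda^{p^l})}:W(A)\to W(A)]$, so the original class was already $1$ in $H^2_0(\widehat{\gg}^{(\lambda^{p^l})},\widehat{\GG}_{m,A})$, proving injectivity. (When $B\ne A$ one first gets $\vv\in\Im F^{(\lambda^{p^l})}$ over $B$; faithful flatness of $A\to B$ together with faithful flatness of $F^{(\lambda^{p^l})}$ over $A$ gives it over $A$.)

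The main obstacle, and the step deserving the most care, is the passage from ``$G(\psi^{(l)}(X),\psi^{(l)}(Y))$ equals $F_p(\vv,\lambda^{p^l};\psi^{(l)}(X),\psi^{(l)}(Y))$'' to ``$G(U,V)=F_p(\vv,\lambda^{p^l};U,V)$.'' This requires knowing that substitution of $\psi^{(l)}(X),\psi^{(l)}(Y)$ is injective on the relevant ring of formal power series in two variables, which in turn rests on $\psi^{(l)}$ being faithfully flat as a morphism of formal schemes; the subtlety is that $\psi^{(l)}$ is only finite flat, not étale, so one cannot invoke separability but must instead use that $A[[T]]$ is free (hence faithfully flat) as a module over its image $A[[\psi^{(l)}(T)]]$ — this is where the hypothesis that $\nu_0=a_0$ is a unit or $0$, i.e. that $p^l\lambda$ is, up to a unit, a power of $\lambda^{p^l}$, will again be used to guarantee that $\psi^{(l)}(T)=T^{p^l}(1+\cdots)$ has the expected form (leading term $X^{p^l}$ with unit coefficient) making $A[[T]]$ finite free of rank $p^l$ over $A[[\psi^{(l)}(T)]]$. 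A secondary technical point is to make sure the base change $A\to B$ introduced before Lemma~1.2 can be undone; this follows from flat descent since $F^{(\lambda^{p^l})}$ is faithfully flat over any base, so membership in its image is an fppf-local condition.
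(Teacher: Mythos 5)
Your skeleton tracks the paper's proof quite closely: pass to the Witt-vector description of $H^2_0$, write $(\psi^{(l)})^\ast F_p(\vv,\lambda^{p^l};X,Y)=F_p(F^{(\lambda)}\circ T_{\aa}(\ww),\lambda;X,Y)$ with $\ww\in W(B)$ for an fppf $A\to B$, invoke Lemma~1.2, and use injectivity of the substitution $T\mapsto\psi^{(l)}(T)$ (the paper's Lemma~4.4) to strip off $\psi^{(l)}$. The genuine gap is the descent step that you defer twice and then resolve incorrectly. Lemma~1.2 only gives $E_p(\xx,\lambda^{p^l};\psi^{(l)}(T))\in A[[T]]$; what you need in order to place $G(U,V)$ in $B^2(\widehat{\gg}^{(\lambda^{p^l})},\widehat{\GG}_{m,A})$ is that $E_p(\xx,\lambda^{p^l};U)$ itself has coefficients in $A$, i.e.\ that $E_p(\xx,\lambda^{p^l};\psi^{(l)}(T))$ lies in $A[[\psi^{(l)}(T)]]$ and not merely in $A[[T]]\cap B[[\psi^{(l)}(T)]]$. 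Because $\psi^{(l)}(T)$ has the lower-order terms $\alpha_iT^i$, the two coefficient systems are intertwined and this containment is not formal: it is exactly the paper's Lemma~4.5 (an induction using faithful flatness of $A\to B$), and it can only be applied after the computation (4.3), which uses $a_0=\nu_0=0$ to show that $E_p(T_{\aa}(\ww),\lambda;T)$ depends only on $F^{(\lambda)}(\uu)$ with $\uu\in W(A)$. In the complementary case ($\nu_0$ a unit) the paper argues differently, via $T_{\aa}$ being an automorphism of $W(A)$. Your proposal never actually deploys the hypothesis on $\nu_0$ at this decisive point, and the two cases genuinely require different arguments.

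The fallback you offer --- that ``membership in the image of $F^{(\lambda^{p^l})}$ is an fppf-local condition'' because $F^{(\lambda^{p^l})}$ is faithfully flat --- is false, and fatally so. Faithful flatness means every $\vv\in W(A)$ lies in the image after \emph{some} fppf base change; if membership in the image descended along fppf maps, then $\Coker(F^{(\lambda^{p^l})})(A)\cong H^2_0(\widehat{\gg}^{(\lambda^{p^l})},\widehat{\GG}_{m,A})$ would vanish for every $A$ and the theorem would be vacuous. (Compare the $p$-th power map on $\GG_m$: it is fppf-surjective, yet not every unit of $A$ is a $p$-th power in $A$; the fibre of $F^{(\lambda^{p^l})}$ over $\vv$ is a torsor under the nontrivial group scheme $\Ker F^{(\lambda^{p^l})}$, so a $B$-point does not yield an $A$-point.) Knowing $\vv\in\Im F^{(\lambda^{p^l})}(B)$ only kills the class in $H^2_0(\widehat{\gg}^{(\lambda^{p^l})},\widehat{\GG}_{m,B})$, which says nothing over $A$. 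A minor further point: your claim that the hypothesis on $\nu_0$ makes $A[[T]]$ finite free of rank $p^l$ over $A[[\psi^{(l)}(T)]]$ is off --- when $\nu_0$ is a unit one has $\alpha_1=\nu_0$, so $T\mapsto\psi^{(l)}(T)$ is an automorphism of $A[[T]]$ --- but the injectivity of the substitution needs neither freeness nor the hypothesis, only that $\psi^{(l)}(T)$ is a polynomial without constant term whose top coefficient is $1$.
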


\begin{remark}
{\rm
The assumption in Theorem 1.4 that $\nu_0$ is a unit can also be seen in \cite{SOS} and \cite{W} as well as Remark~1.1. On the other hand, the assumption that $\nu_0$ is $0$ can also be seen when taking $\psi^{(l)}(X)=X^{p^l}$ over an $\FF_p$-algebra.
}
\end{remark}

The following follows from Theorem~1.4.

\begin{corollary}
{\rm
Assume that $\nu_0$ is a unit or $0$. Then we obtain the injection
\begin{align*}
(\psi^{(l)})^\ast:
H^2_0(\widehat{\GG}_{m,A},\widehat{\GG}_{m,A})
\hookrightarrow
H^2_0(\widehat{\GG}_{m,A},\widehat{\GG}_{m,A})
\end{align*} 
over a $\ZZ_{(p)}$-algebra if $\lambda\in A^\times$ and the injection
\begin{align*}
(\psi^{(l)})^\ast:
H^2_0(\widehat{\GG}_{a,A},\widehat{\GG}_{m,A})
\hookrightarrow
H^2_0(\widehat{\GG}_{a,A},\widehat{\GG}_{m,A})
\end{align*}
over an $\FF_p$-algebra if $\lambda=0$.
}
\end{corollary}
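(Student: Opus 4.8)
The plan is to deduce both injections directly from Theorem~1.4 by specializing the parameter $\lambda$, using only the relationship between $\widehat{\gg}^{(\lambda)}$, $\widehat{\GG}_{m,A}$ and the Frobenius-type homomorphism $\psi^{(l)}$ that has already been recorded above; no new computation should be needed.

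First I would treat the case $\lambda\in A^\times$. Since $\lambda$ is a unit, so is $\lambda^{p^l}$, and the morphism $\alpha^{(\lambda)}:\widehat{\gg}^{(\lambda)}\to\widehat{\GG}_{m,A}$, $x\mapsto 1+\lambda x$, is an isomorphism of formal group schemes, with inverse $t\mapsto\lambda^{-1}(t-1)$; the same holds with $\lambda$ replaced by $\lambda^{p^l}$. Hence $\alpha^{(\lambda)}$ and $\alpha^{(\lambda^{p^l})}$ induce isomorphisms $H^2_0(\widehat{\GG}_{m,A},\widehat{\GG}_{m,A})\xrightarrow{\sim}H^2_0(\widehat{\gg}^{(\lambda)},\widehat{\GG}_{m,A})$ and $H^2_0(\widehat{\GG}_{m,A},\widehat{\GG}_{m,A})\xrightarrow{\sim}H^2_0(\widehat{\gg}^{(\lambda^{p^l})},\widehat{\GG}_{m,A})$. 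By the commutative diagram displayed above, which relates $\psi^{(l)}$ and $\varphi:\widehat{\GG}_{m,A}\to\widehat{\GG}_{m,A}$, $t\mapsto t^{p^l}$, via $\alpha^{(\lambda)}$ and $\alpha^{(\lambda^{p^l})}$, these identifications carry $(\psi^{(l)})^\ast$ to $\varphi^\ast$. Since the standing hypothesis that $\nu_0$ is a unit or $0$ is in force, Theorem~1.4 says $(\psi^{(l)})^\ast$ is injective, hence so is $\varphi^\ast$ on $H^2_0(\widehat{\GG}_{m,A},\widehat{\GG}_{m,A})$, which is the first assertion.

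Next I would treat the case $\lambda=0$. Then $\widehat{\gg}^{(\lambda)}=\widehat{\gg}^{(\lambda^{p^l})}=\widehat{\GG}_{a,A}$, and, as noted above, one may choose $\nu_k=0$ for all $k$; in particular $\nu_0=0$, so the hypothesis of Theorem~1.4 is automatically satisfied. Regarding the $\FF_p$-algebra $A$ as a $\ZZ_{(p)}$-algebra, $\psi^{(l)}(X)=X^{p^l}$ is the $l$-th Frobenius endomorphism of $\widehat{\GG}_{a,A}$, and Theorem~1.4 yields precisely the claimed injection of $H^2_0(\widehat{\GG}_{a,A},\widehat{\GG}_{m,A})$ into itself over an $\FF_p$-algebra.

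The only point that takes a little care is the first case: one must check that $\alpha^{(\lambda)}$ is not merely a morphism of formal schemes but an isomorphism of formal group schemes, so that it induces an isomorphism on $H^2_0$, and that it intertwines $\psi^{(l)}$ with $t\mapsto t^{p^l}$. Both follow from the identity $\lambda^{p^l}\psi^{(l)}(X)=(1+\lambda X)^{p^l}-1$ together with the commutative square recalled in the introduction, so this amounts to unwinding definitions rather than to a genuine obstacle; the substantive content lies entirely in Theorem~1.4.
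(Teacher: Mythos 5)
Your proposal is correct and matches the paper's (implicit) argument: the paper offers no proof beyond "follows from Theorem~1.4," and your two specializations --- identifying $\widehat{\gg}^{(\lambda)}$ with $\widehat{\GG}_{m,A}$ via $\alpha^{(\lambda)}$ when $\lambda\in A^\times$ and with $\widehat{\GG}_{a,A}$ when $\lambda=0$, then transporting $(\psi^{(l)})^\ast$ through the commutative square with $\varphi(t)=t^{p^l}$ --- are exactly the intended unwinding. The care you take with the $\FF_p$-algebra restriction in the $\lambda=0$ case (needed for $X\mapsto X^{p^l}$ to be additive) is also consistent with the paper's Remark~1.5.
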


The following short exact sequence is induced by $\psi^{(l)}$:
\begin{align}
 \begin{CD}
0 @>>> N_l @>{\iota}>> \widehat{\gg}^{(\lambda)} @>{\psi^{(l)}}>> \widehat{\gg}^{(\lambda^{p^l})} @>>> 0,
   \end{CD}
\end{align}
where $N_l:=\Ker[\psi^{(l)}:\widehat{\gg}^{(\lambda)}\rightarrow\widehat{\gg}^{(\lambda^{p^l})}]$ and $\iota$ is a canonical inclusion. The exact sequence $(1.2)$ deduces the long exact sequence
\begin{align*}
 \begin{CD}
1 @>>> \Hom(\widehat{\gg}^{(\lambda^{p^l})},\widehat{\GG}_{m,A}) @>{(\psi^{(l)})^\ast}>> \Hom(\widehat{\gg}^{(\lambda)},\widehat{\GG}_{m,A})  @>{(\iota)^\ast}>>\\
 @>{(\iota)^\ast}>> \Hom(N_l,\widehat{\GG}_{m,A}) 
@>{\partial}>> \Ext^1(\widehat{\gg}^{(\lambda^{p^l})},\widehat{\GG}_{m,A}) @>{(\psi^{(l)})^\ast}>>\\
@>{(\psi^{(l)})^\ast}>> \Ext^1(\widehat{\gg}^{(\lambda)},\widehat{\GG}_{m,A}) @>>> \cdots.
   \end{CD}
\end{align*}
Since we consider only the case where the image of the boundary map $\partial$ is calculated as Hochschild extensions, we can replace $\Ext^1(\widehat{\gg}^{(\lambda)},\widehat{\GG}_{m,A})$ (resp. $\Ext^1(\widehat{\gg}^{(\lambda^{p^l})},\widehat{\GG}_{m,A})$) with $H^2_0(\widehat{\gg}^{(\lambda)},\widehat{\GG}_{m,A})$ (resp. $H^2_0(\widehat{\gg}^{(\lambda^{p^l})},\widehat{\GG}_{m,A})$) (\cite[Lemma~3.]{A1}). Therefore, the long exact sequence
\begin{align*}
\begin{CD}
1 
@>>>
\Hom(\widehat{\gg}^{(\lambda^{p^l})},\widehat{\GG}_{m,A})
@>{(\psi^{(l)})^\ast}>>
\Hom(\widehat{\gg}^{(\lambda)},\widehat{\GG}_{m,A})
@>{(\iota)^\ast}>>\\
@>{(\iota)^\ast}>>
\Hom(N_l,\widehat{\GG}_{m,A})
@>{\partial}>>
H^2_0(\widehat{\gg}^{(\lambda^{p^l})},\widehat{\GG}_{m,A})
@>{(\psi^{(l)})^\ast}>>\\
@>{(\psi^{(l)})^\ast}>>
H^2_0(\widehat{\gg}^{(\lambda)},\widehat{\GG}_{m,A})
@>>> \cdots
\end{CD}
\end{align*}
is obtained. Here we assume that $\nu_0$ is a unit or $0$. Then, by Theorem 1.4, the equalities $\Im(\partial)=\Ker (\psi^{(l)})^\ast=1$ is immediately shown. Then we have:

\begin{corollary}
Let $A$ be a $\ZZ_{(p)}$-algebra. Assume that $\nu_0$ is a unit or $0$. With the above notations, we have the short exact sequence{\rm :}
\begin{align}
\begin{CD}
1 @>>> {\rm Hom}(\widehat{\gg}^{(\lambda^{p^l})},\widehat{\GG}_{m,A}) @>{(\psi^{(l)})^\ast}>> {\rm Hom}(\widehat{\gg}^{(\lambda)},\widehat{\GG}_{m,A})@>{(\iota)^\ast}>>\\
 @>{(\iota)^\ast}>> {\rm Hom}(N_l,\widehat{\GG}_{m,A}) @>>> 1.
\end{CD}
\end{align}
\end{corollary}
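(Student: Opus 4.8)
The plan is to obtain Corollary~1.8 as a purely formal consequence of Theorem~1.4 together with the long exact cohomology sequence already displayed in the excerpt. Since the exact sequence $(1.2)$ is fixed, the contravariant functor $\Hom(-,\widehat{\GG}_{m,A})$ together with the identification of $\Ext^1$ with $H^2_0$ (via \cite[Lemma~3.]{A1}) yields the long exact sequence ending in $\cdots\to H^2_0(\widehat{\gg}^{(\lambda^{p^l})},\widehat{\GG}_{m,A})\xrightarrow{(\psi^{(l)})^\ast}H^2_0(\widehat{\gg}^{(\lambda)},\widehat{\GG}_{m,A})\to\cdots$. The first step is simply to quote this sequence.

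Next I would isolate the relevant five-term segment
\begin{align*}
1 \to \Hom(\widehat{\gg}^{(\lambda^{p^l})},\widehat{\GG}_{m,A}) \xrightarrow{(\psi^{(l)})^\ast} \Hom(\widehat{\gg}^{(\lambda)},\widehat{\GG}_{m,A}) \xrightarrow{(\iota)^\ast} \Hom(N_l,\widehat{\GG}_{m,A}) \xrightarrow{\partial} H^2_0(\widehat{\gg}^{(\lambda^{p^l})},\widehat{\GG}_{m,A}) \xrightarrow{(\psi^{(l)})^\ast} H^2_0(\widehat{\gg}^{(\lambda)},\widehat{\GG}_{m,A}).
\end{align*}
Under the hypothesis that $\nu_0$ is a unit or $0$, Theorem~1.4 asserts that the last arrow $(\psi^{(l)})^\ast$ on $H^2_0$ is injective, i.e.\ $\Ker\bigl((\psi^{(l)})^\ast\bigr)=1$. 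By exactness of the sequence at the term $H^2_0(\widehat{\gg}^{(\lambda^{p^l})},\widehat{\GG}_{m,A})$, this kernel equals $\Im(\partial)$, so $\Im(\partial)=1$. Hence $\partial$ is the trivial map, and again by exactness — this time at $\Hom(N_l,\widehat{\GG}_{m,A})$ — the map $(\iota)^\ast:\Hom(\widehat{\gg}^{(\lambda)},\widehat{\GG}_{m,A})\to\Hom(N_l,\widehat{\GG}_{m,A})$ is surjective, because its image is $\Ker(\partial)=\Hom(N_l,\widehat{\GG}_{m,A})$. Combining this surjectivity of $(\iota)^\ast$ with the injectivity of the first map $(\psi^{(l)})^\ast$ (which holds with no extra hypothesis, the sequence starting from $1$) gives exactly the claimed short exact sequence $(1.3)$.

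There is essentially no obstacle here: the content is entirely in Theorem~1.4, and Corollary~1.8 is a diagram-chase formality once the long exact sequence and the $\Ext^1\cong H^2_0$ identification are in hand. The only point requiring a word of care is the legitimacy of replacing $\Ext^1$ by $H^2_0$ in the boundary portion of the sequence; this is exactly the hypothesis recorded in the excerpt that the image of $\partial$ is computed by Hochschild extensions, so that \cite[Lemma~3.]{A1} applies and the truncated sequence through $H^2_0(\widehat{\gg}^{(\lambda)},\widehat{\GG}_{m,A})$ is genuinely exact. Granting that, the proof is complete in three lines: $\Ker(\psi^{(l)})^\ast=1$ by Theorem~1.4, hence $\Im(\partial)=1$ by exactness, hence $(\iota)^\ast$ is onto by exactness, and the sequence $(1.3)$ follows.
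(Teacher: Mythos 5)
Your proposal is correct and follows exactly the argument the paper gives: the long exact sequence from $(1.2)$ with $\Ext^1$ replaced by $H^2_0$ via \cite[Lemma~3.]{A1}, then Theorem~1.4 gives $\Im(\partial)=\Ker\bigl((\psi^{(l)})^\ast\bigr)=1$, whence $(\iota)^\ast$ is surjective and the short exact sequence $(1.3)$ follows. No gap; this matches the paper's own derivation of the corollary.
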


\begin{remark}
{\rm
Let $A$ be a discrete valuation ring with uniformizer $\pi$ and $\mu$ an element of $(\pi)\,\backslash\,\{0\}$. The algebraic case over $A/(\mu)$ in Corollary~1.7 has already been shown by D.~Tossici \cite[Proposition 3.3.]{To}. In his proof, the nilpotency of $\pi$ in $A/(\mu)$ has been used. We prove the formal case without assuming the existence of nonzero nilpotent elements in the base ring, under the assumption that $\nu_0$ is a unit or $0$. To prove Theorem 1.4 (and Corollary~1.7), we make use of the deformations of Artin-Hasse exponential series and related formal power series introduced in \cite{SS1,SS2}.
}
\end{remark}

Using Theorem~1.4 (and Corollary~1.7), we can determine the Cartier dual of $N_l=\Ker(\psi^{(l)})$ as follows. Let $n$ be a positive integer and $A_n$ a $\ZZ/(p^n)$-algebra. Note that $\ZZ/(p^n)$-algebra $A_n$ is automatically a $\ZZ_{(p)}$-algebra, since $\ZZ/(p^n)\simeq\ZZ_{(p)}/(p^n)$. Assume that $\nu_k$'s are divisible by $p$. Then, by Lemma~4.13 in subsection 4.2, $N_l$ is a finite group scheme of order $p^l$ over $A_n$. We wish to describe the Cartier dual of $N_l$. Put $T' := F^{(\lambda)} \circ T_{\aa}$. For any $A$-algebra $B$, we set 
\begin{align*}
\left(W_A/T_{\aa}\right)(B):= \Coker [ T_{\aa} : W_A(B) \rightarrow W_A(B) ]
\end{align*}
and
\begin{align*}
\left(W_A/ T'\right)(B) := \Coker [ T' : W_A(B) \rightarrow W_A(B) ].
\end{align*}
Note that $W_A/T_{\aa}$ and $W_A/T'$ mean the cokernel in the category of presheaves on $A$-algebras. We consider the diagram
$$ \begin{CD}
 W_A(B)                @>>>          \left(W_A/T_{\aa}\right)(B)\\
 @V{F^{(\lambda)}}VV               @VV{\overline{F^{(\lambda)}}}V\\
 W_A(B)                @>>>          \left(W_A/T'\right)(B).
\end{CD} $$
Here $\overline{F^{(\lambda)}}$ is defined by $\overline{F^{(\lambda)}} (\overline{\xx}) := \overline{F^{(\lambda)}(\xx)}$. It is shown that the homomorphism $\overline{F^{(\lambda)}}$ is well-defined and that the above diagram is commutative. For any $A$-algebra $B$, we put
\begin{align*}
M_l(B):={\mathrm{Ker}} [ \overline{F^{(\lambda)}} : \left(W_A/T_{\aa}\right)(B) \rightarrow \left(W_A/ T'\right)(B) ].
\end{align*}
Note that $M_l$ means the kernel in the category of presheaves on $A$-algebras. Then we obtain:

\begin{theorem}
Let $A_n$ be a $\ZZ/(p^n)$-algebra. Assume that $\nu_k$'s are divisible by $p$ and that $\nu_0$ is $0$. Then the Cartier dual of $N_l$ is canonically isomorphic to 
\begin{align*}
\widetilde{M_l}={\mathrm{Ker}}[ \widetilde{F^{(\lambda)}} : \widetilde{W_{A_n} / T_{\aa}} \rightarrow \widetilde{W_{A_n} / T'} ],
\end{align*}
where $\widetilde{M_l}$ is the fppf-sheafification of the presheaf $M_l$ on $A_n$-algebras.
\end{theorem}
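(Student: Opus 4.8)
The plan is to Cartier-dualize the short exact sequence (1.3) and then identify the resulting dual of $N_l$ with the kernel sheaf $\widetilde{M_l}$. First I would recall that, under the hypothesis that $\nu_0 = 0$ and the $\nu_k$ are divisible by $p$, Lemma~4.13 guarantees that $N_l$ is a finite locally free group scheme of order $p^l$ over the $\ZZ/(p^n)$-algebra $A_n$, so its Cartier dual $N_l^\vee = \mathcal{H}om_{A_n\text{-gp}}(N_l, \GG_{m,A_n})$ is again a finite locally free group scheme of the same order, and the formation of $N_l^\vee$ commutes with base change. The key input is Corollary~1.7: the sequence
\begin{align*}
1 \longrightarrow \Hom(\widehat{\gg}^{(\lambda^{p^l})}, \widehat{\GG}_{m,A_n}) \xrightarrow{(\psi^{(l)})^\ast} \Hom(\widehat{\gg}^{(\lambda)}, \widehat{\GG}_{m,A_n}) \xrightarrow{\iota^\ast} \Hom(N_l, \widehat{\GG}_{m,A_n}) \longrightarrow 1
\end{align*}
is exact, and moreover this exactness persists after any base change $A_n \to B$ (since the same hypotheses on $\nu_0$ and the $\nu_k$ are inherited). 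Thus $\Hom(N_l, \widehat{\GG}_{m,A_n})$, as a functor on $A_n$-algebras, is the cokernel presheaf of $(\psi^{(l)})^\ast$ acting between the two Hom-functors.

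Next I would translate the two outer terms into Witt-vector language. By the Sekiguchi--Suwa description recalled in the introduction, $\Hom(\widehat{\gg}^{(\lambda)}, \widehat{\GG}_{m,B})$ is identified with a subgroup of $W(B)$ cut out by the Artin--Hasse-type conditions — concretely, with the kernel of $F^{(\lambda)}$ (or an appropriate variant) on $W_A$ — and similarly for $\widehat{\gg}^{(\lambda^{p^l})}$; the map $(\psi^{(l)})^\ast$ corresponds, via the factorization $(\psi^{(l)})^\ast F_p(\vv,\lambda^{p^l};\cdot) = F_p(F^{(\lambda)}\circ T_{\aa}(\ww), \lambda; \cdot)$ established in the excerpt, to the operator $T' = F^{(\lambda)}\circ T_{\aa}$ composed with the relevant identifications. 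Chasing this through, the cokernel presheaf $\Hom(N_l, \widehat{\GG}_{m,-})$ becomes exactly the presheaf $B \mapsto M_l(B) = \Ker[\overline{F^{(\lambda)}} : (W_A/T_{\aa})(B) \to (W_A/T')(B)]$: the snake lemma applied to the commutative square defining $\overline{F^{(\lambda)}}$ gives $M_l$ as the obstruction to surjectivity, and the injectivity statements (Theorem~1.4 and the well-definedness of $\overline{F^{(\lambda)}}$) ensure there is no extra kernel term. So on the level of presheaves, $\Hom(N_l, \widehat{\GG}_{m,-}) \cong M_l$.

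Finally I would pass to fppf sheaves. Since $N_l$ is finite locally free, its Cartier dual as an fppf sheaf is represented by a scheme, and $N_l^\vee(B) = \Hom_{B\text{-gp}}(N_{l,B}, \GG_{m,B}) = \Hom(\widehat{\gg}^{(\lambda)}_B \supset N_{l,B}, \widehat{\GG}_{m,B})$ for all $B$ (using that a homomorphism from a finite group scheme to $\GG_m$ factors through $\widehat{\GG}_m$ and extends the formal picture); hence $N_l^\vee$ is the fppf-sheafification of $\Hom(N_l, \widehat{\GG}_{m,-})$, which by the previous paragraph is $\widetilde{M_l}$. Identifying $\widetilde{M_l}$ with $\Ker[\widetilde{F^{(\lambda)}} : \widetilde{W_{A_n}/T_{\aa}} \to \widetilde{W_{A_n}/T'}]$ is then a formal matter, since sheafification is exact and commutes with finite limits, so the kernel of the sheafified map is the sheafification of the kernel presheaf. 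The main obstacle I anticipate is the careful bookkeeping in the middle step: verifying that the Witt-vector operator genuinely realizing $(\psi^{(l)})^\ast$ is $T' = F^{(\lambda)}\circ T_{\aa}$ (not merely up to the identifications used in Lemma~1.2), that $\overline{F^{(\lambda)}}$ is well-defined precisely because $F^{(\lambda)}(\Im T_{\aa}) \subseteq \Im T'$ — which is the commutativity assertion already granted before the theorem statement — and that the snake-lemma connecting map matches $\partial$ from the long exact sequence, so that the hypothesis $\nu_0 = 0$ (which forces $a_0 = 0$, making $T_{\aa}$ a genuine Verschiebung-type operator with the right cokernel) is used in exactly the right place.
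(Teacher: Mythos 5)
Your proposal is correct and follows essentially the same route as the paper: base-change-stable exactness from Corollary~1.7 (hence Theorem~1.4), the Sekiguchi--Suwa identification of $\Hom(\widehat{\gg}^{(\lambda)},\widehat{\GG}_{m,B})$ with $\Ker(F^{(\lambda)})(B)$, identification of the cokernel of the induced map with $M_l(B)$ (the paper does this via an explicit exact sequence, Lemma~4.9, and the five lemma rather than the snake lemma, but the content is the same), Lemma~4.13 for finiteness, and sheafification at the end. The one imprecision to fix is that on the level of Homs, i.e.\ on $\Ker(F^{(\lambda^{p^l})})$, the operator realizing $(\psi^{(l)})^\ast$ is $T_{\aa}$ (via $E_p(\vv,\lambda^{p^l};\psi^{(l)}(X))=E_p(T_{\aa}(\vv),\lambda;X)$, Lemma~4.2), not $T'=F^{(\lambda)}\circ T_{\aa}$, which vanishes identically on that kernel; $T'$ enters only through the definition of $M_l$.
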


\begin{remark}
{\rm
Theorem~1.9 has been stated by the author~\cite{A2}, however, the assumption and proof were insufficient. In particular, Theorem 1.4 had been used implicitly. We correct them and give a precise proof in subsection~4.2.
}
\end{remark}

\begin{remark}
{\rm
Due to the assumption that $\nu_k$'s are divisible by $p$, we unfortunately cannot take $\nu_k$'s as stated in Remark~1.1. As another example, if $\lambda$ is a unit in $A_n$, then assuming $n\leq l$, we can take $\nu_k=p^{l-k}\lambda^{p^k}/\lambda^{p^l}$ since $\nu_0=p^l\lambda/\lambda^{p^l}=0$ and $\nu_k$'s are nilpotent elements in $A_n$.
}
\end{remark}

The contents of this paper are as follows. The next two sections are devoted to recalling the definitions and some properties of the Witt scheme and of the group scheme which deforms $\GG_a$ to $\GG_m$. In section~4 we give our proof of Theorem~1.4 and Theorem~1.9.

Throughout this paper, we use the following notations:

\begin{align*}
A\ :\ &\mbox{$\ZZ_{(p)}$-algebra},\\
A_n:\ &\mbox{$\ZZ/(p^n)$-algebra},\\
\nu_k\ :\ &\mbox{element of $A$ satisfying $p^{l-k}\lambda^{p^k}=\nu_k\lambda^{p^l}$},\\
\GG_{a,A}\ :\ & \mbox{additive group scheme over $A$},\\
\GG_{m,A}\ :\ & \mbox{multiplicative group scheme over $A$},\\
\widehat{\GG}_{a,A}\ :\ & \mbox{additive formal group scheme over $A$},\\
\widehat{\GG}_{m,A}\ :\ & \mbox{multiplicative formal group scheme over $A$},\\
W_{A}\ :\ & \mbox{group scheme of Witt vectors over $A$},\\
F\ :\ & \mbox{Frobenius endomorphism of $W_{A}$},\\
V\ :\ & \mbox{Verschiebung endomorphism of $W_{A}$}\\
[\lambda]\ :\ & \mbox{Teichm\"{u}ller lifting $(\lambda,0,0,\ldots)\in W(A)$ of $\lambda\in A$},\\
F^{(\lambda)}\ :\ & =F-[\lambda^{p-1}],\\
\aa\ :\ & \mbox{element of $W(A)$ satisfying $T_{\aa}([\lambda^{p^l}])=p^l[\lambda]$},\\
T_{\aa}\ :\ & \mbox{homomorphism decided by $\aa \in W(A)$\ (recalled in section 2)},\\
\dfrac{1}{\mu}\aa\ :\ & =\left(\dfrac{a_0}{\mu},\dfrac{a_1}{\mu},\ldots\right)\ \mbox{for}\ \mu \in A\ \mbox{and}\ \aa=(a_0,a_1,\ldots)\in W(A),\\
\Ext^1(G,H)\ :\ & \mbox{isomorphism classes of extensions of $G$ by $H$},\\
H^2_0(G,H)\ :\ & \mbox{Hochschild cohomology of $G$ with coefficients in $H$}.
\end{align*}

\section{Witt vectors}

In this short section, we recall the necessary facts on Witt vectors in this paper. For details, see \cite[Chap.~V]{DG} or \cite[Chap.~III]{HZ}.

\subsection{Definition of Witt vectors}

Let $\mathbb{X}=( X_0 , X_1 , \ldots )$ be a sequence of variables. For each $n  \geq 0 $, we denote by $\Phi_n(\mathbb{X})=\Phi_n(X_0,X_1,\ldots,X_n)$ the Witt polynomial
\begin{align*}
\Phi_n(\mathbb{X})=X_0^{p^n}+pX_1^{p^{n-1}}+\dots+p^nX_n
\end{align*}
in $\ZZ [ \mathbb{X} ] = \ZZ [ X_0 , X_1 , \ldots ]$. Let $W_{\ZZ} = \Spec\, \ZZ [\mathbb{X} ]$ be an $\infty$-dimensional affine space over $\ZZ$. The phantom map $\Phi$ is defined by
\begin{align*}
\Phi : W_{\ZZ} \rightarrow \mathbb{A}^\infty_\ZZ ; \ \xx \mapsto ( \Phi_0 ( \xx ) , \Phi_1 ( \xx ) , \ldots),
\end{align*}
where $\mathbb{A}^\infty_\ZZ$ is the usual $\infty$-dimensional affine space over $\ZZ$. The scheme $\mathbb{A}^\infty_\ZZ$ has a natural ring scheme structure. It is known that $W_{\ZZ}$ has a unique commutative ring scheme structure over $\ZZ$ such that the phantom map $\Phi$ is a homomorphism of commutative ring schemes over $\ZZ$. Then $A$-valued points $W_\ZZ(A)$ are called Witt vectors over $A$.

\subsection{Some morphisms of Witt vectors}

We define a morphism $F:W(A)\rightarrow W(A)$ by
\begin{align*}
\Phi_i(F(\xx))=\Phi_{i+1}(\xx)
\end{align*}
for $\xx\in W(A)$. If $A$ is an $\FF_p$-algebra, $F$ is nothing but the usual Frobenius endomorphism. Let $[\lambda]$ be the Teichm\"{u}ller lifting $[\lambda]=(\lambda,0,0,\ldots)\in W(A)$ for $\lambda\in A$. Then we set the endomorphism $F^{(\lambda)}:=F-[\lambda^{p-1}]$ of $W(A)$. For $\aa=(a_0,a_1,\ldots) \in W(A)$, we also define a morphism $T_{\aa}:W(A) \rightarrow W(A)$ by
\begin{align*}
\Phi_n ( T_{\aa} (\xx) ) = {a_0}^{p^n} \Phi_n (\xx) + p {a_1}^{p^{n-1}} \Phi_{n-1} ( \xx ) + \cdots + p^n a_n \Phi_0 ( \xx )
\end{align*}
for $\xx \in W(A)$ (\cite[Chap.4, p.20]{SS2}). Then the morphism $T_{\aa}$ is called $T$-map.

\section{The group scheme $\gg^{(\lambda)}$ which deforms $\GG_a$ to $\GG_m$}

In this short section, we recall necessary facts on the group scheme $\gg^{(\lambda)}$ which deforms $\GG_a$ to $\GG_m$ for this paper.

\subsection{Definition of the group scheme $\gg^{(\lambda)}$}

Let $A$ be a ring and $\lambda$ an element of $A$. Put $\gg^{(\lambda)} := \Spec\, A [ X , 1 / (1 + \lambda X ) ]$. We define a morphism $\alpha^{(\lambda)}$ by
\begin{align*}
\alpha^{(\lambda)} : \gg^{(\lambda)} \rightarrow \GG_{m,A};\ 
x \mapsto 1 + \lambda x.
\end{align*}
It is known that $\gg^{(\lambda)}$ has a unique commutative group scheme structure such that $\alpha^{(\lambda)}$ is a group scheme homomorphism over $A$. Then the group scheme structure of $\gg^{(\lambda)}$ is given by $x \cdot y = x + y + \lambda xy$.
If $\lambda$ is invertible in $A$, $\alpha^{(\lambda)}$ is an $A$-isomorphism. On the other hand, if $\lambda=0$, $\gg^{(\lambda)}$ is nothing but the additive group scheme $\GG_{a,A}$.

\subsection{Deformed Artin-Hasse exponential series}

In this subsection, we recall several formal series and relations between them, which are introduced and proved in \cite{SS1} and \cite{SS2}.

The Artin-Hasse exponential series $E_p(X)$ is given by
\begin{align*}
E_p (X) = \exp \left( \sum_{r \geq 0} \frac{X^{p^r}}{p^r} \right)
\in \ZZ_{(p)} [[X]].
\end{align*}
We define a formal power series $E_p( U, \Lambda ; X )$ in $\QQ [ U, \Lambda ] [[X]]$ by
\begin{align*}
E_p ( U, \Lambda ; X ) = ( 1 + \Lambda X)^{\frac{U}{\Lambda}} \prod_{k=1}^{\infty} ( 1 + \Lambda^{p^k} X^{p^k} )^{ \frac{1}{p^k} \left\{\left( \frac{U}{\Lambda} \right)^{p^k}-\left( \frac{U}{\Lambda} \right)^{ p^{k-1} } \right\} }.
\end{align*}
As in \cite[Corollary~2.5.]{SS1} or \cite[Lemma~4.8.]{SS2}, we see that the formal power series $E_p(U,\Lambda;X)$ is integral over $\ZZ_{(p)}$. Note that $E_p(1,0;X)=E_p(X)$. Let $A$ be a $\ZZ_{(p)}$-algebra. For $\lambda\in A$ and $\vv=(v_0,v_1,\ldots)\in W(A)$, we define a formal power series $E_p(\vv,\lambda;X)$ in $A[[X]]$ by
\begin{align*}
E_p(\vv,\lambda;X)=\prod_{k=0}^{\infty}E_p(v_k,\lambda^{{p^k}};X^{p^k})
                  =(1+\lambda X)^{\frac{v_0}{\lambda}}\prod_{k=1}^{\infty}(1+\lambda^{p^k}X^{p^k})^{\frac{1}{p^k\lambda^{p^k}}\Phi_{k-1}(F^{(\lambda)}(\vv))}.
\end{align*}
And we define a formal power series $F_p(\vv,\lambda;X,Y)$ as follows:
\begin{align*}
F_p(\vv,\lambda;X,Y)=\prod_{k=1}^{\infty}\left(\frac{(1+\lambda^{p^k}X^{p^k})(1+\lambda^{p^k}Y^{p^k})}{1+\lambda^{p^k}(X+Y+\lambda XY)^{p^k}}\right)^{\frac{1}{p^k\lambda^{p^k}}\Phi_{k-1}(\vv)}.
\end{align*}
As in \cite[Lemma~2.16.]{SS1} or \cite[Lemma~4.9.]{SS2}, we see that the formal power series $F_p(\vv,\lambda;X,Y)$ is integral over $\ZZ_{(p)}$. For the formal power series $F_p(F^{(\lambda)}(\vv),\lambda;X,Y)$, the equalities
\begin{align*}
F_p(F^{(\lambda)}(\vv),\lambda;X,Y)
&=\prod_{k=1}^{\infty}\left(\dfrac{(1+\lambda^{p^k}X^{p^k})(1+\lambda^{p^k}Y^{p^k})}{1+\lambda^{p^k}(X+Y+\lambda XY)^{p^k}}\right)^{\frac{1}{p^k\lambda^{p^k}}\Phi_{k-1}(F^{(\lambda)}(\vv))}\\
&=\dfrac{E_p(\vv,\lambda;X)E_p(\vv,\lambda;Y)}{E_p(\vv,\lambda;X+Y+\lambda XY)}
\end{align*}
hold (for details, see \cite[proof of Lemma~3.1]{SS1}). For $\uu\in W(A)$, we define
\begin{align*}
\tilde{p}^kE_p(\uu,\lambda;X):=E_p(V^k(\uu^{(p^k)}),\lambda;X).
\end{align*}
Then the formal power series $G_p(\vv,\mu;E)$ is introduced as follows:
\begin{align*}
G_p(\vv,\mu;E)&=\prod_{k \geq 1}\left(\frac{1+(E-1)^{p^k}}{\tilde{p}^kE} \right)^{\frac{1}{p^k\mu^{p^k}}\Phi_{k-1}(\vv)},
\end{align*}
where $\mu\in A$ and $E:=E_p(\uu,\lambda;X)$ for a vector $\uu\in W(A)$.

\section{Proof of Theorem~1.4 and Theorem~1.9}

In this section, we prove Theorem 1.4 and Theorem~1.9.

\subsection{Proof of Theorem~1.4}

The notation described in the introduction will be taken over. We denote by $Z^2(\widehat{\gg}^{(\lambda)},\widehat{\GG}_{m,A})$ the subgroup of $A[[X,Y]]^\times$ formed by the symmetric $2$-cocycles of $\widehat{\gg}^{(\lambda)}$ with coefficients in $\widehat{\GG}_{m,A}$. Take $F(T)\in A[[T]]^\times$. Then $F(X)F(Y)F(X+Y+\lambda XY)^{-1}\in Z^2(\widehat{\gg}^{(\lambda)},\widehat{\GG}_{m,A})$. Moreover we set
\begin{align}
B^2(\widehat{\gg}^{(\lambda)},\widehat{\GG}_{m,A}):=\left\{F(X)F(Y)F(X+Y+\lambda XY)^{-1}\mid F(T)\in A[[T]]^\times \right\}.
\end{align}
Put
\begin{align*}
H^2_0(\widehat{\gg}^{(\lambda)},\widehat{\GG}_{m,A})
:=Z^2(\widehat{\gg}^{(\lambda)},\widehat{\GG}_{m,A})/B^2(\widehat{\gg}^{(\lambda)},\widehat{\GG}_{m,A})
\end{align*}
(for details, see \cite[subsection 2.2]{SS1}). In \cite[Theorem 2.19.1.]{SS1}, the isomorphism
\begin{align}
\Coker[F^{(\lambda)}:W(A)\rightarrow W(A)]\xrightarrow{\sim} H^2_0(\widehat{\gg}^{(\lambda)},\widehat{\GG}_{m,A});\ \vv\mapsto F_p(\vv,\lambda;X,Y)
\end{align}
has been given. We consider the homomorphism
\begin{align*}
(\psi^{(l)})^\ast:H^2_0(\widehat{\gg}^{(\lambda^{p^l})},\widehat{\GG}_{m,A})&\rightarrow H^2_0(\widehat{\gg}^{(\lambda)},\widehat{\GG}_{m,A});\\ F_p(\vv,\lambda^{p^l};X,Y)&\mapsto F_p(\vv,\lambda^{p^l};\psi^{(l)}(X),\psi^{(l)}(Y)).
\end{align*}
induced by $\psi^{(l)}:\widehat{\gg}^{(\lambda)}\rightarrow\widehat{\gg}^{(\lambda^{p^l})}$ (\cite[Lemma~4.]{A1}). We wish to prove that $(\psi^{(l)})^\ast$ is injective. We choose $\aa\in W(A)$ such that $T_{\aa}([\lambda^{p^l}])=p^l[\lambda]$, where $T_{\aa}([\lambda^{p^l}])=(\lambda^{p^l}a_0,\lambda^{p^l}a_1,\ldots)$ for $\aa=(a_0,a_1,\ldots)$. Note that, if $\lambda$ is not a zero divisor, then we can write as $\aa=p^l[\lambda]/\lambda^{p^l}$, where $p^l[\lambda]/\lambda^{p^l}=(b_0/\lambda^{p^l},b_1/\lambda^{p^l},\ldots)$ for $p^l[\lambda]=(b_0,b_1,\ldots)$.

\begin{lemma}
Put $E:=E_p([\lambda^{p^l}],\lambda^{p^l};\psi^{(l)}(X))$. For any $\xx\in W(A)$, the equality
\begin{align*}
G_p(\xx,\lambda^{p^l};E)=1
\end{align*}
holds over $A$.
\end{lemma}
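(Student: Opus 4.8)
The plan is to compute the formal power series $G_p(\xx,\lambda^{p^l};E)$ directly from its defining product and show that every exponent vanishes. Recall that
\[
G_p(\xx,\mu;E)=\prod_{k\geq 1}\left(\frac{1+(E-1)^{p^k}}{\widetilde{p}^kE}\right)^{\frac{1}{p^k\mu^{p^k}}\Phi_{k-1}(\xx)},
\]
and here $E=E_p([\lambda^{p^l}],\lambda^{p^l};\psi^{(l)}(X))$ with $\mu=\lambda^{p^l}$. The key observation is that $[\lambda^{p^l}]$ is a Teichm\"uller vector, so $F^{(\lambda^{p^l})}([\lambda^{p^l}])=F([\lambda^{p^l}])-[\lambda^{p^l(p-1)}]\cdot[\lambda^{p^l}]$ has a particularly simple shape; more to the point, $E_p(U,\Lambda;X)$ specialized at $U=\Lambda$ collapses: from the definition
\[
E_p(\Lambda,\Lambda;X)=(1+\Lambda X)^{1}\prod_{k=1}^\infty(1+\Lambda^{p^k}X^{p^k})^{\frac1{p^k}(1-1)}=1+\Lambda X.
\]
Hence $E_p([\lambda^{p^l}],\lambda^{p^l};T)=1+\lambda^{p^l}T$, and therefore
\[
E=1+\lambda^{p^l}\psi^{(l)}(X)=(1+\lambda X)^{p^l},
\]
using the identity $\lambda^{p^l}\psi^{(l)}(X)=(1+\lambda X)^{p^l}-1$ recalled in the introduction (valid for all $\lambda$, not merely non-zero-divisors, since both sides are polynomial identities in $A[X]$ once the $\alpha_i$ are fixed).

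With $E=(1+\lambda X)^{p^l}$ in hand, the numerator of the $k$-th factor becomes $1+(E-1)^{p^k}=1+\big((1+\lambda X)^{p^l}-1\big)^{p^k}$, and the denominator is $\widetilde{p}^kE=E_p(V^k(([\lambda^{p^l}])^{(p^k)}),\lambda^{p^l};\psi^{(l)}(X))$. I would next show that $\widetilde{p}^kE$ also equals $1+(E-1)^{p^k}$, i.e. that $E_p(V^k([\lambda^{p^l}]^{(p^k)}),\lambda^{p^l};\psi^{(l)}(X))=1+\big((1+\lambda X)^{p^l}-1\big)^{p^k}$. This is a computation with the Verschiebung and the explicit product formula for $E_p(\vv,\lambda;X)$: applying $V^k$ to a Teichm\"uller vector shifts the phantom components, and the product collapses in the same telescoping manner as above because the base vector is again essentially Teichm\"uller. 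Once numerator equals denominator in every factor, each base $\frac{1+(E-1)^{p^k}}{\widetilde{p}^kE}$ is $1$, so the whole product is $1$ regardless of the exponents $\frac{1}{p^k\lambda^{p^{l+k}}}\Phi_{k-1}(\xx)$ (these exponents are in any case integral by the results quoted in subsection 3.2, but we will not even need that).

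The main obstacle is the identification $\widetilde{p}^kE=1+(E-1)^{p^k}$. One must be careful that $\widetilde{p}^kE_p(\uu,\lambda;X)$ is defined as $E_p(V^k(\uu^{(p^k)}),\lambda;X)$ and not simply as $E(X)^{p^k}$ with $X$ replaced; the point of the lemma is precisely that for the Teichm\"uller choice $\uu=[\lambda^{p^l}]$ these two notions coincide and match $1+(E-1)^{p^k}$. I expect this to follow by comparing phantom vectors: $\Phi_j(V^k([\lambda^{p^l}]^{(p^k)}))$ is $p^k\Phi_{j-k}$ of a Teichm\"uller vector for $j\geq k$ and $0$ for $j<k$, which feeds through the defining product of $E_p(\vv,\lambda^{p^l};X)$ to produce exactly the exponent pattern of $1+\big((1+\lambda X)^{p^l}-1\big)^{p^k}$; alternatively one can cite the functional equations for $E_p$ and $G_p$ recorded in \cite{SS1,SS2} (in particular the relation between $\widetilde{p}^kE$ and $E^{p^k}$ modulo the correction series there) to conclude directly. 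Either way, the lemma reduces to a bookkeeping check on Witt-vector phantom components and the telescoping of the Artin--Hasse-type product.
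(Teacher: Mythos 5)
Your proposal is correct and follows essentially the same route as the paper: both reduce the claim to the collapse $E_p(\Lambda,\Lambda;X)=1+\Lambda X$, giving $E=1+\lambda^{p^l}\psi^{(l)}(X)$, and then to the identity $\widetilde{p}^kE=1+\lambda^{p^{l+k}}\psi^{(l)}(X)^{p^k}=1+(E-1)^{p^k}$, so that every factor of $G_p$ has base $1$. The one step you only sketch is exactly what the paper disposes of by citing \cite[Lemma~4.10]{SS2}; your direct verification does go through, since $V^k([\lambda^{p^l}]^{(p^k)})=V^k([\lambda^{p^{l+k}}])$ has a single nonzero component in position $k$ and the product formula $E_p(\vv,\mu;X)=\prod_j E_p(v_j,\mu^{p^j};X^{p^j})$ then leaves only the factor $E_p(\lambda^{p^{l+k}},\lambda^{p^{l+k}};\psi^{(l)}(X)^{p^k})=1+\lambda^{p^{l+k}}\psi^{(l)}(X)^{p^k}$.
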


\begin{proof}
Since $E=E_p([\lambda^{p^l}],\lambda^{p^l};\psi^{(l)}(X))=1+\lambda^{p^l}\psi^{(l)}(X)$, the equality
\begin{align*}
1+(E-1)^{p^k}=1+\lambda^{p^{l+k}} \psi^{(l)}(X)^{p^k}
\end{align*}
holds. On the other hand, by \cite[Lemma~4.10.]{SS2}, we have
\begin{align*}
\widetilde{p}^{k}E=E_p([\lambda^{p^{l+k}}],\lambda^{p^{l+k}};\psi^{(l)}(X)^{p^k})=1+\lambda^{p^{l+k}} \psi^{(l)}(X)^{p^k}.
\end{align*}
Hence the equalities
\begin{align*}
G_p(\xx,\lambda^{p^l};E)
=\prod_{k \geq 1}\left(\frac{1+(E-1)^{p^k}}{\widetilde{p}^{k}E} \right)^{\frac{1}{p^k\lambda^{p^k}}\Phi_{k-1}(\xx)}=1
\end{align*}
are obtained.
\end{proof}

The following lemma plays a key role in our proof:

\begin{lemma}
For any $\vv\in W(A)$, the equality
\begin{align*}
E_p(\vv,\lambda^{p^l};\psi^{(l)}(T))=E_p(T_{\aa}(\vv),\lambda;T)
\end{align*}
holds over $A$.
\end{lemma}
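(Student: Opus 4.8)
The plan is to prove the identity $E_p(\vv,\lambda^{p^l};\psi^{(l)}(T))=E_p(T_{\aa}(\vv),\lambda;T)$ by first reducing to a universal situation where $\lambda$ is not a zero divisor, and then comparing the two sides after substituting the known closed forms. Concretely, I would begin by passing to the ``generic'' base ring: introduce an indeterminate $\Lambda$ over $\ZZ_{(p)}$, work over $\ZZ_{(p)}[\Lambda]$ (or its fraction field $\QQ(\Lambda)$) with $\aa$ replaced by the canonical element $p^l[\Lambda]/\Lambda^{p^l}$, and observe that both sides are given by formal power series with coefficients that are integral over $\ZZ_{(p)}$ (by the integrality statements recalled in subsection~3.2 for $E_p(U,\Lambda;X)$ and hence for $E_p(\vv,\lambda;X)$). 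Since the desired equality is an identity between such integral formal power series, it suffices to verify it after base change to $\QQ(\Lambda)$, and then to specialize $\Lambda\mapsto\lambda$, $\vv$ and $\aa$ back to the given $A$; the components of $T_{\aa}(\vv)$ are universal polynomials in the $a_i$ and $v_i$, so the specialization is legitimate once $A$ is a $\ZZ_{(p)}$-algebra and $\aa$ satisfies $T_{\aa}([\lambda^{p^l}])=p^l[\lambda]$.

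Over $\QQ(\Lambda)$ (where $\Lambda$ is invertible and not a zero divisor), I would use the explicit product formula
\begin{align*}
E_p(\vv,\Lambda;X)=(1+\Lambda X)^{\frac{v_0}{\Lambda}}\prod_{k\geq 1}(1+\Lambda^{p^k}X^{p^k})^{\frac{1}{p^k\Lambda^{p^k}}\Phi_{k-1}(F^{(\Lambda)}(\vv))}
\end{align*}
on both sides. The point is that $\psi^{(l)}(T)$ satisfies $1+\Lambda^{p^l}\psi^{(l)}(T)=(1+\Lambda T)^{p^l}$, hence $1+(\Lambda^{p^l})^{p^k}\psi^{(l)}(T)^{p^k}=(1+\Lambda^{p^{l}} T^{p^l})^{p^k}\cdot(\text{correction})$ — more precisely, $1+\Lambda^{p^{l+k}}\psi^{(l)}(T)^{p^k}=\bigl((1+\Lambda T)^{p^l}\bigr)^{p^k}=(1+\Lambda T)^{p^{l+k}}$, which rewrites every factor of $E_p(\vv,\Lambda^{p^l};\psi^{(l)}(T))$ in terms of powers of $(1+\Lambda T)$. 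Matching this against the factors of $E_p(T_{\aa}(\vv),\Lambda;T)$ reduces the whole identity to an equality of exponents: one must check that for every $n\geq 0$,
\begin{align*}
\frac{1}{p^n\Lambda^{p^n}}\Phi_{n-1}\!\bigl(F^{(\Lambda)}(T_{\aa}(\vv))\bigr)
\;=\;
\frac{p^{l}}{p^{n+l}(\Lambda^{p^l})^{p^{n}}}\,\Phi_{\,?}\!\bigl(F^{(\Lambda^{p^l})}(\vv)\bigr)\cdot(\ldots)
\end{align*}
after collecting, via $\log$, all contributions to the coefficient of $\log(1+\Lambda T)$; the cleanest way is to take logarithms of both sides and compare the resulting phantom-type expansions term by term.

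The real engine of the comparison is the interaction between $T_{\aa}$, $F^{(\lambda)}$, and the phantom components. The defining relations give $\Phi_n(T_{\aa}(\vv))=\sum_{j=0}^{n}p^j a_j^{p^{n-j}}\Phi_{n-j}(\vv)$ and $\Phi_n(F^{(\lambda)}(\vv))=\Phi_{n+1}(\vv)-\lambda^{p^n(p-1)}\Phi_n(\vv)$ (the latter from $F^{(\lambda)}=F-[\lambda^{p-1}]$ together with $\Phi_n(F(\vv))=\Phi_{n+1}(\vv)$ and the multiplicativity of $\Phi$ on Teichmüller elements). Using the hypothesis $T_{\aa}([\lambda^{p^l}])=p^l[\lambda]$, i.e. $a_0^{p^n}\lambda^{p^{l+n}}+\dots+p^n a_n\lambda^{p^l}=p^l\lambda^{p^n}$ at the level of phantom components, one shows $\Phi_n\bigl(F^{(\lambda)}(T_{\aa}(\vv))\bigr)=p^l\,\Phi_n\bigl(F^{(\lambda^{p^l})}(\vv)\bigr)$ — this is exactly the identity $F^{(\lambda)}\circ T_{\aa}=T_{p^l[\lambda]/\lambda^{p^l}}\circ F^{(\lambda^{p^l})}$-type relation, read off on phantoms — and substituting this into the logarithmic expansion of $E_p(T_{\aa}(\vv),\lambda;T)$ matches it against the logarithmic expansion of $E_p(\vv,\lambda^{p^l};\psi^{(l)}(T))$ after the rewriting $1+\lambda^{p^{l+k}}\psi^{(l)}(T)^{p^k}=(1+\lambda T)^{p^{l+k}}$. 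I expect the main obstacle to be purely bookkeeping: keeping track of which $\Phi_n$ appears with which shift and power of $p$ and $\lambda$ when one expands $\log E_p$, and verifying that the ``generic'' reduction genuinely covers the zero-divisor case (one should make sure the map $\ZZ_{(p)}[\Lambda,\text{universal }v_i]\to A$ sending $\Lambda\mapsto\lambda$ and the universal $a_i$ to the chosen $a_i$ is well-defined, which it is precisely because the relation $T_{\aa}([\lambda^{p^l}])=p^l[\lambda]$ pins the $a_i$ down as specializations of the generic ones when $\Lambda$ is not a zero divisor, and both sides of the claimed identity are continuous in the $T$-adic topology). Once the exponent identity is in hand, the lemma follows.
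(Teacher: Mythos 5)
Your overall frame---pass to the universal situation over $\ZZ_{(p)}[\Lambda]$ with $\aa=p^l[\Lambda]/\Lambda^{p^l}$, use integrality of the deformed Artin--Hasse series to reduce to a computation over $\QQ(\Lambda)$, then specialize to $(\lambda,\aa,\vv)$ in $A$---is exactly the paper's strategy. But the computation you put at the heart of the argument contains a genuine error. You assert that
\begin{align*}
1+\Lambda^{p^{l+k}}\psi^{(l)}(T)^{p^k}=\bigl((1+\Lambda T)^{p^l}\bigr)^{p^k}=(1+\Lambda T)^{p^{l+k}},
\end{align*}
i.e.\ that $1+x^{p^k}=(1+x)^{p^k}$ for $x=\Lambda^{p^l}\psi^{(l)}(T)$. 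This holds only in characteristic $p$ (already for $p=2$, $k=1$: $(1+x)^2=1+2x+x^2\neq 1+x^2$), and the lemma is being proved over a $\ZZ_{(p)}$-algebra. Only the $k=0$ factor, $1+\Lambda^{p^l}\psi^{(l)}(T)=(1+\Lambda T)^{p^l}$, rewrites as a power of $1+\Lambda T$; for $k\geq 1$ the factors $1+\Lambda^{p^{l+k}}\psi^{(l)}(T)^{p^k}$ of $E_p(\vv,\Lambda^{p^l};\psi^{(l)}(T))$ are \emph{not} powers of $1+\Lambda T$ (nor of any $1+\Lambda^{p^n}T^{p^n}$), so the ``matching of factors'' and the subsequent comparison of logarithmic expansions never gets off the ground. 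The discrepancy $\bigl(1+(E-1)^{p^k}\bigr)/\tilde{p}^kE$ between the two kinds of factors is precisely what the series $G_p$ measures; the paper's proof quotes the decomposition $E_p(\UU,\Lambda^{p^l};(E^{p^l}-1)/\Lambda^{p^l})=\widetilde{E}_p(\cdots)\cdot G_p(F^{(\Lambda^{p^l})}\UU,\Lambda^{p^l};\cdots)$ from Sekiguchi--Suwa and then invokes Lemma~4.1 to show that for $E=E_p([\Lambda^{p^l}],\Lambda^{p^l};\psi^{(l)}(X))$ the $G_p$ factor is identically $1$. That step (or an equivalent identity) is the actual content of the lemma, and it is missing from your argument.

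A secondary problem: the phantom identity you propose as the ``engine,'' $\Phi_n\bigl(F^{(\lambda)}(T_{\aa}(\vv))\bigr)=p^l\,\Phi_n\bigl(F^{(\lambda^{p^l})}(\vv)\bigr)$, is false. Already for $n=0$ it would force $a_0^p=p^l$, whereas the constraint $T_{\aa}([\lambda^{p^l}])=p^l[\lambda]$ only gives $a_0\lambda^{p^l}=p^l\lambda$. The correct commutation is $F^{(\lambda)}\circ T_{\aa}=T'_{\aa}\circ F^{(\lambda^{p^l})}$ for a \emph{different} $T$-map $T'_{\aa}$ (cf.\ Remark~4.6 and [MRT2, Lemma~6.3]), not multiplication by $p^l$. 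So even granting the (false) factor rewriting, the exponent bookkeeping you sketch would not close. To repair the proof you would need to either carry out the full logarithmic comparison honestly---which amounts to reproving the Sekiguchi--Suwa identity for $E_p(\UU,\Lambda^{p^l};(E^{p^l}-1)/\Lambda^{p^l})$---or cite it and then dispose of the $G_p$ factor as the paper does.
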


\begin{proof}
By the definition and the results stated in \cite[Chap.4,\ p.29 and Proposition~4.11.]{SS2}, we have the equalities
\begin{align*}
E_p(\UU,\Lambda^{p^l};\psi^{(l)}(X))&=E_p(\UU,\Lambda^{p^l};\{(1+\Lambda X)^{p^l}-1\}/\Lambda^{p^l})\\
                                    &=E_p(\UU,\Lambda^{p^l};(E_p([\Lambda],\Lambda;X)^{p^l}-1)/\Lambda^{p^l})\\
                                    &=\widetilde{E}_p(\UU,\Lambda^{p^l};E_p(p^l[\Lambda],\Lambda;X))\cdot G_p(F^{(\Lambda^{p^l})}\UU,\Lambda^{p^l};E_p(p^l[\Lambda],\Lambda;X))\\
                                    &=E_p(T_{p^l[\Lambda]/\Lambda^{p^l}}\UU,\Lambda;X)\cdot G_p(F^{(\Lambda^{p^l})}\UU,\Lambda^{p^l};E_p([\Lambda^{p^l}],\Lambda^{p^l};\psi^{(l)}(X)))
\end{align*}
for a variable $\Lambda$ and a vector $\UU:=(U_0,U_1,\ldots)$. Here, using Lemma~4.1, we obtain
\begin{align*}
E_p(\UU,\Lambda^{p^l};\psi^{(l)}(X))&=E_p(T_{p^l[\Lambda]/\Lambda^{p^l}}\UU,\Lambda;X)\in\ZZ_{(p)}[\UU,\Lambda,\dfrac{p^l[\Lambda]}{\Lambda^{p^l}}][[X]]
\end{align*}
By considering $p^l[\Lambda]/\Lambda^{p^l}$ as a vector $\AA$ such that $T_\AA([\Lambda^{p^l}])=p^l[\Lambda]$, we can see that the above last equality is also true for $\vv$, $\lambda$ and $\aa$.
\end{proof}

Note that, for the case $\vv\in\Ker(F^{(\lambda^{p^l})})(A)$, Lemma~4.2 has already been stated in the proof in \cite[Lemma~1.]{A1}. By Lemma~4.2, we can rewrite the cocycle $F_p(\vv,\lambda^{p^l};\psi^{(l)}(X),\psi^{(l)}(Y))$ as follows.

\begin{lemma}
Assume that for $\vv\in W(A)$, there exists $\ww\in W(B)$ satisfying $\vv=F^{(\lambda^{p^l})}(\ww)$, where $B$ is an $A$-algebra. Then the equality
\begin{align*}
F_p(\vv,\lambda^{p^l};\psi^{(l)}(X),\psi^{(l)}(Y))=F_p(F^{(\lambda)}\circ T_{\aa}(\ww),\lambda;X,Y)
\end{align*}
holds.
\end{lemma}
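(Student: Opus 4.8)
The plan is to reduce the claimed equality to the coboundary expression for $F_p$ recalled in subsection~3.2, namely
\begin{align*}
F_p(F^{(\mu)}(\vv),\mu;X,Y)=\frac{E_p(\vv,\mu;X)\,E_p(\vv,\mu;Y)}{E_p(\vv,\mu;X+Y+\mu XY)},
\end{align*}
applied once with $\mu=\lambda^{p^l}$ and once with $\mu=\lambda$, the two applications being bridged by Lemma~4.2.

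First I would use the hypothesis $\vv=F^{(\lambda^{p^l})}(\ww)$ to rewrite the left-hand side as $F_p(F^{(\lambda^{p^l})}(\ww),\lambda^{p^l};\psi^{(l)}(X),\psi^{(l)}(Y))$, and then substitute $\psi^{(l)}(X)$ and $\psi^{(l)}(Y)$ into the coboundary identity with $\mu=\lambda^{p^l}$. Since $\psi^{(l)}(X)$ has zero constant term, this substitution is legitimate in $B[[X,Y]]$, and it gives
\begin{align*}
F_p(\vv,\lambda^{p^l};\psi^{(l)}(X),\psi^{(l)}(Y))
=\frac{E_p(\ww,\lambda^{p^l};\psi^{(l)}(X))\,E_p(\ww,\lambda^{p^l};\psi^{(l)}(Y))}{E_p\bigl(\ww,\lambda^{p^l};\,\psi^{(l)}(X)+\psi^{(l)}(Y)+\lambda^{p^l}\psi^{(l)}(X)\psi^{(l)}(Y)\bigr)}.
\end{align*}
The key point is that $\psi^{(l)}$ is a group scheme homomorphism $\widehat{\gg}^{(\lambda)}\to\widehat{\gg}^{(\lambda^{p^l})}$, so the argument of $E_p$ in the denominator equals $\psi^{(l)}(X+Y+\lambda XY)$.

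Next I would invoke Lemma~4.2 for the three arguments $T=X$, $T=Y$ and $T=X+Y+\lambda XY$ (the last substitution is again allowed because $X+Y+\lambda XY$ has zero constant term), replacing each $E_p(\ww,\lambda^{p^l};\psi^{(l)}(T))$ by $E_p(T_{\aa}(\ww),\lambda;T)$. Here one should observe that Lemma~4.2, although stated over $A$, is an identity of power series integral over $\ZZ_{(p)}$ that is compatible with base change: since $W$, $F^{(\lambda)}$ and $T_{\aa}$ commute with the structure map $W(A)\to W(B)$ and the relation $T_{\aa}([\lambda^{p^l}])=p^l[\lambda]$ persists in $W(B)$, the equality of Lemma~4.2 holds in $B[[T]]$ as well. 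After this substitution the left-hand side becomes
\begin{align*}
\frac{E_p(T_{\aa}(\ww),\lambda;X)\,E_p(T_{\aa}(\ww),\lambda;Y)}{E_p(T_{\aa}(\ww),\lambda;X+Y+\lambda XY)},
\end{align*}
which by the coboundary identity with $\mu=\lambda$ and vector $T_{\aa}(\ww)$ equals $F_p(F^{(\lambda)}(T_{\aa}(\ww)),\lambda;X,Y)=F_p(F^{(\lambda)}\circ T_{\aa}(\ww),\lambda;X,Y)$, the desired right-hand side.

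I do not expect a serious obstacle: the argument is a short chain of substitutions into identities already established in subsections~3.2 and~4.1. The only points demanding attention are (i) verifying that every power-series substitution is legitimate, i.e. that $\psi^{(l)}(X)$, $\psi^{(l)}(Y)$ and $X+Y+\lambda XY$ have zero constant term, and (ii) justifying the use of Lemma~4.2 over the $A$-algebra $B$ rather than over $A$ itself, which follows from the naturality of $W$, $F^{(\lambda)}$ and $T_{\aa}$ with respect to base change.
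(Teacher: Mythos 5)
Your proof is correct and follows essentially the same route as the paper: rewrite via $\vv=F^{(\lambda^{p^l})}(\ww)$, expand by the coboundary identity for $F_p(F^{(\mu)}(\cdot),\mu;X,Y)$, use that $\psi^{(l)}$ is a homomorphism to simplify the denominator, apply Lemma~4.2 termwise, and reassemble with $\mu=\lambda$. Your additional remarks on the legitimacy of the power-series substitutions and on the base change of Lemma~4.2 from $A$ to $B$ make explicit points the paper leaves implicit, but do not alter the argument.
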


\begin{proof}
By Lemma 4.2, the equalities
\begin{align*}
&F_p(\vv,\lambda^{p^l};\psi^{(l)}(X),\psi^{(l)}(Y))\\
&=F_p(F^{(\lambda^{p^l})}(\ww),\lambda^{p^l};\psi^{(l)}(X),\psi^{(l)}(Y))\\
&=\dfrac{E_p(\ww,\lambda^{p^l};\psi^{(l)}(X))\cdot E_p(\ww,\lambda^{p^l};\psi^{(l)}(Y))}{E_p(\ww,\lambda^{p^l};\psi^{(l)}(X)+\psi^{(l)}(Y)+\lambda^{p^l}\psi^{(l)}(X)\psi^{(l)}(Y))}\\
&=\dfrac{E_p(\ww,\lambda^{p^l};\psi^{(l)}(X))\cdot E_p(\ww,\lambda^{p^l};\psi^{(l)}(Y))}{E_p(\ww,\lambda^{p^l};\psi^{(l)}(X+Y+\lambda XY))}\\
&=\dfrac{E_p(T_{\aa}(\ww),\lambda;X)\cdot E_p(T_{\aa}(\ww),\lambda;Y)}{E_p(T_{\aa}(\ww),\lambda;X+Y+\lambda XY)}\\
&=F_p(F^{(\lambda)}\circ T_{\aa}(\ww),\lambda;X,Y)
\end{align*}
hold.
\end{proof}

The following Lemma is used in our proof of the case where $\nu_0$ is a unit.

\begin{lemma}
The group homomorphism
\begin{align*}
(\psi^{(l)})^\ast:Z^2(\widehat{\gg}^{(\lambda^{p^l})},\widehat{\GG}_{m,A})\to Z^2(\widehat{\gg}^{(\lambda)},\widehat{\GG}_{m,A});\ 
F(X,Y)\mapsto F(\psi^{(l)}(X),\psi^{(l)}(Y))
\end{align*}
induced by $\psi^{(l)}:\widehat{\gg}^{(\lambda)}\rightarrow\widehat{\gg}^{(\lambda^{p^l})}$ is injective.
\end{lemma}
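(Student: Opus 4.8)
The plan is to deduce the injectivity of $(\psi^{(l)})^\ast$ on $Z^2$ directly from the fact, recalled in the introduction, that $\psi^{(l)}:\widehat{\gg}^{(\lambda)}\rightarrow\widehat{\gg}^{(\lambda^{p^l})}$ is a \emph{surjective} homomorphism. First I would unwind the definition of $Z^2$ from \cite[subsection~2.2]{SS1}: a symmetric $2$-cocycle $F(X,Y)$ is in particular an element of $A[[X,Y]]^\times$, i.e.\ a morphism of formal schemes $\widehat{\gg}^{(\lambda^{p^l})}\times\widehat{\gg}^{(\lambda^{p^l})}\rightarrow\widehat{\GG}_{m,A}$, and under this identification $(\psi^{(l)})^\ast$ is exactly precomposition with $\psi^{(l)}\times\psi^{(l)}:\widehat{\gg}^{(\lambda)}\times\widehat{\gg}^{(\lambda)}\rightarrow\widehat{\gg}^{(\lambda^{p^l})}\times\widehat{\gg}^{(\lambda^{p^l})}$, i.e.\ $F(X,Y)\mapsto F(\psi^{(l)}(X),\psi^{(l)}(Y))$. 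So it suffices to prove that precomposition with $\psi^{(l)}\times\psi^{(l)}$ is injective on $\Hom(-,\widehat{\GG}_{m,A})$.

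For this I would show that $\psi^{(l)}\times\psi^{(l)}$ is an epimorphism of fppf sheaves. The map $\psi^{(l)}$ itself is such an epimorphism --- this is the sense in which the exact sequence $(1.2)$ is exact --- epimorphisms in a topos are stable under base change, and $\psi^{(l)}\times\psi^{(l)}=(\psi^{(l)}\times\mathrm{id})\circ(\mathrm{id}\times\psi^{(l)})$ is a composite of two base changes of $\psi^{(l)}$, hence again an epimorphism. Precomposition with an epimorphism is injective on $\Hom$-sets, and restricting to the subgroup of symmetric $2$-cocycles gives the lemma. Equivalently, at the level of coordinate rings, the continuous $A$-algebra endomorphism of $A[[X,Y]]$ sending $X\mapsto\psi^{(l)}(X)$, $Y\mapsto\psi^{(l)}(Y)$ is injective, so $F(\psi^{(l)}(X),\psi^{(l)}(Y))=1$ forces $F(X,Y)=1$.

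It is also worth recording the direct argument in the case where the lemma will be used, $\nu_0\in A^\times$. The linear coefficient of $\psi^{(l)}(X)=\sum_{i=1}^{p^l-1}\alpha_iX^i+X^{p^l}$ is $\alpha_1=\nu_0$ (since $k_1=r_1=0$ and $\binom{p^l}{1}p^{-l}=1$), which is then a unit, so $\psi^{(l)}(X)$ is invertible with respect to substitution; therefore the endomorphism of $A[[X,Y]]$ above is an automorphism and the injectivity (indeed bijectivity) of $(\psi^{(l)})^\ast$ on $Z^2$ is immediate.

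I expect the only delicate point to be resisting a naive computational proof: trying to prove injectivity of $X\mapsto\psi^{(l)}(X)$ on $A[[X]]$ by matching leading coefficients works whenever the lowest nonzero coefficient of $\psi^{(l)}(X)$ is a non-zero-divisor (in particular when $\nu_0$ is a unit), but fails when that coefficient is a zero-divisor, which can occur when $\nu_0=0$; there one would have to use that $\psi^{(l)}(X)$ is monic of degree $p^l$ and argue through the full $(X,Y)$-adic filtration. Going through ``$\psi^{(l)}$ is a surjective homomorphism, hence an epimorphism'' avoids this, which is why I would present the proof that way.
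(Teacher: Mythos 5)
Your proof is correct and takes essentially the same route as the paper's: the paper likewise deduces the injectivity of $(\psi^{(l)})^\ast$ on $Z^2$ from the surjectivity of $\psi^{(l)}$, via the resulting injection $A[[T]]\hookrightarrow A[[T]]$, $T\mapsto\psi^{(l)}(T)$, and hence $A[[X,Y]]^\times\hookrightarrow A[[X,Y]]^\times$. Your version merely spells out the ``surjective implies injective on functions'' step more explicitly (epimorphism of fppf sheaves, precomposition with an epimorphism) and adds the observation that for $\nu_0\in A^\times$ the substitution $T\mapsto\psi^{(l)}(T)$ is in fact an automorphism of $A[[T]]$.
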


\begin{proof}
Note that the coordinate rings of $\widehat{\gg}^{(\lambda)}$ and $\widehat{\gg}^{(\lambda^{p^l})}$ are given by
\begin{align*}
\mathcal{O}(\widehat{\gg}^{(\lambda)})=\mathcal{O}(\widehat{\gg}^{(\lambda^{p^l})})=A[[T]].
\end{align*}
Since $\psi^{(l)}$ is surjective, we have the injective homomorphism
\begin{align*}
A[[T]]\hookrightarrow A[[T]];\ T\mapsto \psi^{(l)}(T).
\end{align*}
Hence, by the correspondence
\begin{align*}
F(X,Y)\mapsto F(\psi^{(l)}(X),\psi^{(l)}(Y)),
\end{align*}
we obtain the injection
\begin{align*}
\mathcal{O}(\widehat{\gg}^{(\lambda^{p^l})}\times\widehat{\gg}^{(\lambda^{p^l})})=A[[X,Y]]
\hookrightarrow
\mathcal{O}(\widehat{\gg}^{(\lambda)}\times\widehat{\gg}^{(\lambda)})=A[[X,Y]].
\end{align*}
This leads to the injection
\begin{align*}
(\psi^{(l)})^\ast:
&\Hom_{A\mathchar`-\mathrm{alg}}(\widehat{\gg}^{(\lambda^{p^l})}\times\widehat{\gg}^{(\lambda^{p^l})},\widehat{\GG}_{m,A})=A[[X,Y]]^\times\\
&\hookrightarrow
\Hom_{A\mathchar`-\mathrm{alg}}(\widehat{\gg}^{(\lambda)}\times\widehat{\gg}^{(\lambda)},\widehat{\GG}_{m,A})=A[[X,Y]]^\times.
\end{align*}
Therefore, the injection
\begin{align*}
(\psi^{(l)})^\ast:Z^2(\widehat{\gg}^{(\lambda^{p^l})},\widehat{\GG}_{m,A})
\hookrightarrow
Z^2(\widehat{\gg}^{(\lambda)},\widehat{\GG}_{m,A}).
\end{align*}
is obtained.
\end{proof}

The following Lemma is used in our proof of the case $\nu_0=0$.

\begin{lemma}
Let $A\to B$ be an fppf morphism. Assume that
\begin{align*}
E_p(\vv,\lambda^{p^l};\psi^{(l)}(T))\in A[[T]]\quad \mbox{and}\quad
E_p(\vv,\lambda^{p^l};\psi^{(l)}(T))\in B[[\psi^{(l)}(T)]].
\end{align*}
Then $E_p(\vv,\lambda^{p^l};\psi^{(l)}(T))$ is contained in $A[[\psi^{(l)}(T)]]$.
\end{lemma}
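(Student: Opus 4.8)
The plan is to obtain the conclusion from faithfully flat descent applied to the expansion of $E_p(\vv,\lambda^{p^l};\psi^{(l)}(T))$ in powers of $\psi^{(l)}(T)$. Put $f:=E_p(\vv,\lambda^{p^l};\psi^{(l)}(T))$ and let $S$ be an auxiliary variable; for an $A$-algebra $R$ write $\sigma_R:R[[S]]\to R[[T]]$ for the substitution $S\mapsto\psi^{(l)}(T)$, so that $R[[\psi^{(l)}(T)]]=\sigma_R(R[[S]])$ by definition. As in the proof of Lemma~4.4, the surjectivity of $\psi^{(l)}:\widehat{\gg}^{(\lambda)}\to\widehat{\gg}^{(\lambda^{p^l})}$ shows that $\sigma_R$ is injective for every $A$-algebra $R$; I will use this for $R=B\otimes_A B$. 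The hypothesis $f\in B[[\psi^{(l)}(T)]]$ says that $f=\sigma_B(g)$ for some $g=\sum_{n\geq 0}g_nS^n\in B[[S]]$, and it suffices to show that $g$ lies in the image of $A[[S]]\hookrightarrow B[[S]]$, i.e. that every $g_n$ lies in $A$: granting this, $f=\sigma_B(g)=\sigma_A(g)\in A[[\psi^{(l)}(T)]]$. Note that one cannot in general read the $g_n$ off from the $T$-coefficients of $f$, since membership in $B[[\psi^{(l)}(T)]]$ is not a coefficientwise condition when $\psi^{(l)}(T)\neq T^{p^l}$; this is what forces the use of descent.

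Since $A\to B$ is fppf, hence faithfully flat, one has $A=\{b\in B\mid\iota_1(b)=\iota_2(b)\}$, where $\iota_1,\iota_2:B\to B\otimes_A B$ are the two canonical maps; applied coefficientwise, this identifies the image of $A[[S]]\hookrightarrow B[[S]]$ with the equalizer of $\iota_1,\iota_2:B[[S]]\to(B\otimes_A B)[[S]]$. So it is enough to prove $\iota_1(g)=\iota_2(g)$ in $(B\otimes_A B)[[S]]$. Since $\psi^{(l)}(T)$ has coefficients in $A$, the substitution $\sigma$ commutes with $\iota_1$ and $\iota_2$, so
\begin{align*}
\sigma_{B\otimes_A B}\bigl(\iota_1(g)\bigr)=\iota_1\bigl(\sigma_B(g)\bigr)=\iota_1(f)
\qquad\text{and}\qquad
\sigma_{B\otimes_A B}\bigl(\iota_2(g)\bigr)=\iota_2(f).
\end{align*}
But $f\in A[[T]]$, so $\iota_1(f)=\iota_2(f)$ in $(B\otimes_A B)[[T]]$; hence $\iota_1(g)$ and $\iota_2(g)$ have the same image under $\sigma_{B\otimes_A B}$, and injectivity of $\sigma_{B\otimes_A B}$ gives $\iota_1(g)=\iota_2(g)$. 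Therefore $g\in A[[S]]$ and $f\in A[[\psi^{(l)}(T)]]$, as desired.

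The only non-formal ingredient is the injectivity of $\sigma_R:R[[S]]\to R[[T]]$, $S\mapsto\psi^{(l)}(T)$, for $R=B\otimes_A B$; this is precisely what the proof of Lemma~4.4 establishes for $R=A$, and since that argument uses only the surjectivity of $\psi^{(l)}$, which persists after the flat base change $A\to B\otimes_A B$, it carries over without change. Everything else — the equalizer description of the image of $A[[S]]\to B[[S]]$ and the compatibility of $\sigma$ with $\iota_1,\iota_2$ — needs nothing beyond $A\to B$ being faithfully flat and $\psi^{(l)}(T)$ having coefficients in $A$, so I expect no further obstacle.
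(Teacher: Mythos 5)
Your argument is correct, but it proves the lemma by a genuinely different mechanism than the paper. The paper's proof expands $E_p(\vv,\lambda^{p^l};\psi^{(l)}(T))=\sum_k d_k\psi^{(l)}(T)^k$ and shows $d_k\in A$ one coefficient at a time by induction, the key input being the injection $A[[T]]/(\psi^{(l)}(T)^k)\hookrightarrow B[[T]]/(\psi^{(l)}(T)^k)$ supplied by faithful flatness, together with the fact that $\psi^{(l)}(T)$ has no constant term (so that the leading coefficient of the tail can be read off after cancelling $\psi^{(l)}(T)^k$). You instead run a one-shot descent argument: you use the exactness of $0\to A\to B\rightrightarrows B\otimes_AB$ applied coefficientwise to $B[[S]]$, the compatibility of the substitution $S\mapsto\psi^{(l)}(T)$ with the two coprojections (valid because $\psi^{(l)}(T)\in A[T]$), and the injectivity of that substitution over $B\otimes_AB$. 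The one point in your write-up that deserves the care you gave it is precisely this last input: Lemma~4.4 as stated only provides injectivity of $h(T)\mapsto h(\psi^{(l)}(T))$ over $A$, and you correctly observe that its justification (surjectivity of $\psi^{(l)}$) persists after the base change to $B\otimes_AB$, so the same injectivity holds there; this is a slightly stronger input than the paper's proof uses (the paper only needs $\psi^{(l)}(T)^k$ to be cancellable in $B[[T]]$), but it is available at the same level of rigor the paper itself works at. What your route buys is a cleaner, induction-free statement of exactly which descent-theoretic facts are in play; what the paper's route buys is a more elementary, self-contained computation that avoids invoking the Amitsur complex and the base-changed form of Lemma~4.4.
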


\begin{proof} 
Put
\begin{align*}
E_p(\vv,\lambda^{p^l};\psi^{(l)}(T)):=d_0+d_1\psi^{(l)}(T)+d_2\psi^{(l)}(T)^2+\cdots+d_k\psi^{(l)}(T)^k+\cdots.
\end{align*}
We show $d_k\in A$ by induction on $k$. If $k=0$, $d_0=1\in A$ holds. Assume $d_i\in A$ for $1\leq i\leq k-1$. Then 
 we have
\begin{align*}
d_0+d_1\psi^{(l)}(T)+d_2\psi^{(l)}(T)^2+\cdots+d_{k-1}\psi^{(l)}(T)^{k-1}\in A[T].
\end{align*}
Hence the formal power series
\begin{align*}
f_k(\psi^{(l)}(T))
&:=E_p(\vv,\lambda^{p^l};\psi^{(l)}(T))\\
&\quad -\left\{d_0+d_1\psi^{(l)}(T)+d_2\psi^{(l)}(T)^2+\cdots+d_{k-1}\psi^{(l)}(T)^{k-1}\right\}
\end{align*}
 is contained in $A[[T]]$ and $(\psi^{(l)}(T)^k)$, where $(\psi^{(l)}(T)^k)$ is the ideal of $B[[T]]$ generated by $\psi^{(l)}(T)^k$. Here, since $A\to B$ is faithfully flat, we obtain the injection
\begin{align*}
A[[T]]/(\psi^{(l)}(T)^k)\hookrightarrow B[[T]]/(\psi^{(l)}(T)^k).
\end{align*}
This leads to 
\begin{align*}
f_k(\psi^{(l)}(T))=(d_k+d_{k+1}\psi^{(l)}(T)+\cdots)\psi^{(l)}(T)^k\in(\psi^{(l)}(T)^k),
\end{align*}
where $(\psi^{(l)}(T)^k)$ is the ideal of $A[[T]]$ generated by $\psi^{(l)}(T)^k$. Hence, since $\psi^{(l)}(T)$ has no constant term, we obtain $d_k\in A$. Therefore, since $d_k\in A$ for any $k$,
\begin{align*}
E_p(\vv,\lambda^{p^l};\psi^{(l)}(T))\in A[[\psi^{(l)}(T)]]
\end{align*}
is obtained. 
\end{proof}

With the above preparations, we complete our proof of Theorem 1.4. Take
\begin{align*}
F_p(\vv,\lambda^{p^l};X,Y)\in Z^2(\widehat{\gg}^{(\lambda^{p^l})},\widehat{\GG}_{m,A})
\end{align*}
as a representative of an element of $H^2_0(\widehat{\gg}^{(\lambda^{p^l})},\widehat{\GG}_{m,A})$, where $\vv\in W(A)$. Then we have the cocycle
\begin{align*}
(\psi^{(l)})^\ast F_p(\vv,\lambda^{p^l};X,Y)=F_p(\vv,\lambda^{p^l};\psi^{(l)}(X),\psi^{(l)}(Y))\in Z^2(\widehat{\gg}^{(\lambda)},\widehat{\GG}_{m,A}).
\end{align*}
Note that, since $F^{(\lambda^{p^l})}:W_A\rightarrow W_A$ is faithfully flat (\cite[Corollary 1.8.]{SS1}), for any $\vv\in W(A)$, there exists $\ww\in W(B)$ such that $\vv=F^{(\lambda^{p^l})}(\ww)$ for an fppf morphism $A\to B$. Then, by Lemma~4.3, the equality
\begin{align*}
F_p(\vv,\lambda^{p^l};\psi^{(l)}(X),\psi^{(l)}(Y))
=F_p(F^{(\lambda)}\circ T_{\aa}(\ww),\lambda;X,Y)
\end{align*}
holds in $Z^2(\widehat{\gg}^{(\lambda)},\widehat{\GG}_{m,A})$ over $A$. Assume
\begin{align*}
F_p(F^{(\lambda)}\circ T_{\aa}(\ww),\lambda;X,Y)\in B^2(\widehat{\gg}^{(\lambda)},\widehat{\GG}_{m,A}).
\end{align*}
By the isomorphism (4.2), this assumption is equivalent to the existence of $\uu\in W(A)$ such that $F^{(\lambda)}\circ T_{\aa}(\ww)=F^{(\lambda)}(\uu)$. We now prove Theorem~1.4 for the cases where $\nu_0$ is a unit and $\nu_0=0$, respectively. First, we show the case where $\nu_0$ is a unit. We can take the first component $a_0$ of $\aa$ such that $a_0=\nu_0$. In this case, since $T_{\aa}:W(A)\rightarrow W(A)$ is an isomorphism by \cite[Proposition~4.4.]{SS2} or \cite[Lemma~6.1.4.]{MRT1}, there exists $\uu'\in W(A)$ such that $\uu=T_{\aa}(\uu')$. Then we have the equalities
\begin{align*}
F_p(\vv,\lambda^{p^l};\psi^{(l)}(X),\psi^{(l)}(Y))
&=F_p(F^{(\lambda)}\circ T_{\aa}(\ww),\lambda;X,Y)\\
&=F_p(F^{(\lambda)}(\uu),\lambda;X,Y)\\
&=F_p(F^{(\lambda)}\circ T_{\aa}(\uu'),\lambda;X,Y)\\
&=F_p(F^{(\lambda^{p^l})}(\uu'),\lambda^{p^l};\psi^{(l)}(X),\psi^{(l)}(Y)).
\end{align*}
Here, by Lemma~4.4, we obtain the equalities
\begin{align*}
F_p(\vv,\lambda^{p^l};X,Y)&=F_p(F^{(\lambda^{p^l})}(\uu'),\lambda^{p^l};X,Y)\\
                          &=\dfrac{E_p(\uu',\lambda^{p^l};X)\cdot E_p(\uu',\lambda^{p^l};Y)}{E_p(\uu',\lambda^{p^l};X+Y+\lambda^{p^l} XY)}\in B^2(\widehat{\gg}^{(\lambda^{p^l})},\widehat{\GG}_{m,A})
\end{align*}
over $A$. This means that $(\psi^{(l)})^\ast$ is injective. Next, we show the case $\nu_0=0$. Then we can take as $a_0=0$. In this case, since the first component of $T_{\aa}(\ww)$ is given by $a_0w_0=0$ for $\ww=(w_0,w_1,\ldots)$, we have
\begin{align}
\begin{split}
E_p(T_{\aa}(\ww),\lambda;T)
&=(1+\lambda T)^{\frac{a_0w_0}{\lambda}}\prod_{k=1}^{\infty}(1+\lambda^{p^k}T^{p^k})^{\frac{1}{p^k\lambda^{p^k}}\Phi_{k-1}(F^{(\lambda)}\circ T_{\aa}(\ww))}\\
&=\prod_{k=1}^{\infty}(1+\lambda^{p^k}T^{p^k})^{\frac{1}{p^k\lambda^{p^k}}\Phi_{k-1}(F^{(\lambda)}(\uu))}\\
&=E_p(\uu,\lambda;T)\cdot(1+\lambda T)^{-\frac{u_0}{\lambda}}\in A[[T]]^\times,
\end{split}
\end{align}
where $\uu=(u_0,u_1,\ldots)\in W(A)$. We aim at proving
\begin{align*}
F_p(\vv,\lambda^{p^l};X,Y)
=\dfrac{E_p(\ww,\lambda^{p^l};X)\cdot E_p(\ww,\lambda^{p^l};Y)}{E_p(\ww,\lambda^{p^l};X+Y+\lambda XY)}\in B^2(\widehat{\gg}^{(\lambda^{p^l})},\widehat{\GG}_{m,A})
\end{align*}
over $A$. By Lemma 4.2 and (4.3), we obtain
\begin{align}
E_p(\ww,\lambda^{p^l};\psi^{(l)}(T))=E_p(T_{\aa}(\ww),\lambda;T)\in A[[T]]^\times.
\end{align}
On the other hand, since $\ww\in W(B)$, we have $E_p(\ww,\lambda^{p^l};T)\in B[[T]]^\times$. This implies
\begin{align}
E_p(\ww,\lambda^{p^l};\psi^{(l)}(T))\in B[[\psi^{(l)}(T)]]^\times.
\end{align}
Then, applying Lemma 4.5 to (4.4) and (4.5), we get
\begin{align*}
E_p(\ww,\lambda^{p^l};T)\in A[[T]]^\times.
\end{align*}
Therefore, by (4.1), the cocycle
\begin{align*}
F_p(\vv,\lambda^{p^l};X,Y)
=F_p(F^{(\lambda^{p^l})}(\ww),\lambda^{p^l};X,Y)
=\dfrac{E_p(\ww,\lambda^{p^l};X)\cdot E_p(\ww,\lambda^{p^l};Y)}{E_p(\ww,\lambda^{p^l};X+Y+\lambda^{p^l} XY)}
\end{align*}
is contained in $B^2(\widehat{\gg}^{(\lambda^{p^l})},\widehat{\GG}_{m,A})$ over $A$. This means that $(\psi^{(l)})^\ast$ is injective. Our proof of Theorem~1.4 is now complete. 

\begin{remark}
{\rm
By A.~M\'{e}zard, M.~Romagny and D.~Tossici \cite[Lemma~6.3.]{MRT2}, the existence of $T'_{\aa}$ which is $F^{(\lambda)}\circ T_{\aa}=T'_{\aa}\circ F^{(\lambda^{p^l})}$ has been shown. Here, under the assumption that $\nu_0=a_0$ is a unit or $\nu_0=a_0=0$, we prove that
\begin{align}
T'_{\aa}:\Coker(F^{(\lambda^{p^l})})(A)\to \Coker(F^{(\lambda)})(A)
\end{align}
is injective. For any $F^{(\lambda^{p^l})}(\vv)\in \Im(F^{(\lambda^{p^l})})(A)$, since
\begin{align*}
T'_{\aa}\circ F^{(\lambda^{p^l})}(\vv)=F^{(\lambda)}\circ T_{\aa}(\vv)\in\Im(F^{(\lambda)})(A),
\end{align*}
(4.6) is well-defined. We consider the diagram
\begin{align}
\begin{CD}
H^2_0(\widehat{\gg}^{(\lambda^{p^l})},\widehat{\GG}_{m,A}) @>{(\psi^{(l)})^\ast}>> H^2_0(\widehat{\gg}^{(\lambda)},\widehat{\GG}_{m,A})\\
@A{\phi_1}AA @A{\phi_2}AA\\
\Coker(F^{(\lambda^{p^l})})(A) @>{T'_{\aa}}>> \Coker(F^{(\lambda)})(A),
\end{CD}
\end{align}
where $\phi_1$ and $\phi_2$ are isomorphisms mentioned in (4.2). Let $\vv\in W(A)$ be a representative of $\overline{\vv}\in\Coker(F^{(\lambda^{p^l})})(A)$. Then, by Lemma~4.3, we have the equalities
\begin{align*}
(\psi^{(l)})^\ast\circ \phi_1(\vv)&=F_p(F^{(\lambda)}\circ T_{\aa}(\ww),\lambda;X,Y)\\
                                &=F_p(T'_{\aa}\circ F^{(\lambda^{p^l})}(\ww),\lambda;X,Y)\\
                                &=F_p(T'_{\aa}(\vv),\lambda;X,Y)=\phi_2\circ T'_{\aa}(\vv).
\end{align*}
Hence the diagram (4.7) is commutative. Therefore, since $(\psi^{(l)})^\ast$ is injective under our assumptions, (4.6) is also injective.
}
\end{remark}

\subsection{Proof of Theorem~1.9}

The notation described in the introduction will be taken over. Let $B$ be an $A$-algebra.

\begin{lemma}
The equality
\begin{align*}
\Ker(F^{(\lambda^{p^l})})(B)=\Ker(F^{(\lambda)}\circ T_{\aa})(B)
\end{align*}
holds.
\end{lemma}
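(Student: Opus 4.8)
The plan is to prove the equality of kernels by establishing a two‑sided inclusion, leveraging the intertwining relation $F^{(\lambda)}\circ T_{\aa}=T'_{\aa}\circ F^{(\lambda^{p^l})}$ recalled from \cite{MRT2} (see Remark~4.6), together with the hypothesis that $\nu_k$'s are divisible by $p$ and $\nu_0=a_0=0$, which is in force throughout subsection~4.2. First I would observe that $\Ker(F^{(\lambda^{p^l})})(B)\subseteq\Ker(F^{(\lambda)}\circ T_{\aa})(B)$ is formal: if $F^{(\lambda^{p^l})}(\xx)=0$, then $F^{(\lambda)}\circ T_{\aa}(\xx)=T'_{\aa}\circ F^{(\lambda^{p^l})}(\xx)=T'_{\aa}(0)=0$. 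The content is the reverse inclusion.

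For the reverse inclusion, suppose $\xx=(x_0,x_1,\dots)\in W(B)$ satisfies $F^{(\lambda)}(T_{\aa}(\xx))=0$, i.e. $F(T_{\aa}(\xx))=[\lambda^{p-1}]\cdot T_{\aa}(\xx)$. I would work componentwise on the ghost (phantom) components, using $\Phi_n(T_{\aa}(\xx))=\sum_{j=0}^{n}p^{j}a_j^{p^{n-j}}\Phi_{n-j}(\xx)$ from subsection~2.2. Since $a_0=\nu_0=0$, the $j=0$ term vanishes and the remaining terms each carry a factor of $p$ (indeed $a_j=\nu_j$ is divisible by $p$, and there is already a $p^j$ with $j\ge 1$), so $\Phi_n(T_{\aa}(\xx))$ lies in a high power of $p$ times a combination of the $\Phi_{m}(\xx)$. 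The relation $F^{(\lambda)}(T_{\aa}(\xx))=0$ translates on ghost components into $\Phi_{n+1}(T_{\aa}(\xx))=\lambda^{p^{n+1}-p^n}\,\Phi_n(T_{\aa}(\xx))$ (using that the ghost components of $F^{(\lambda)}(\yy)$ are $\Phi_{n+1}(\yy)-\lambda^{p^{n+1}-p^n}\Phi_n(\yy)$ — here one must be slightly careful, as $F^{(\lambda)}=F-[\lambda^{p-1}]$ is additive but $[\lambda^{p-1}]$ acts via multiplication in the ring scheme, so its $n$‑th ghost component is $\lambda^{(p-1)p^n}\Phi_n(\yy)$). I would then run an induction on $n$ to deduce that $F^{(\lambda^{p^l})}(\xx)=0$, by showing its ghost components $\Phi_{n+1}(\xx)-\lambda^{(p^l-1)\cdots}\Phi_n(\xx)$ all vanish; the divisibility hypotheses are exactly what is needed to cancel the powers of $p$ appearing as denominators when one inverts the recursion coming from $T_{\aa}$, and the relation $T_{\aa}([\lambda^{p^l}])=p^l[\lambda]$ pins down the compatibility between the two twists $\lambda$ and $\lambda^{p^l}$.

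An alternative, cleaner route I would try first is to factor through $T'_{\aa}$ and show directly that $T'_{\aa}$ is \emph{injective} as a map $W_A(B)\to W_A(B)$ under the present hypotheses (not merely injective on cokernels as in Remark~4.6): then $F^{(\lambda)}\circ T_{\aa}(\xx)=T'_{\aa}(F^{(\lambda^{p^l})}(\xx))=0$ forces $F^{(\lambda^{p^l})}(\xx)=0$ immediately. Injectivity of $T'_{\aa}$ should follow by a ghost‑component argument: the leading behaviour of $T'_{\aa}$ is governed by the first component of $\aa$ after the relevant twist, and one checks that the induced map on each graded piece is injective because the relevant coefficient is a non‑zero‑divisor (here the divisibility of the $\nu_k$'s and the explicit form of $\aa$ make the relevant leading coefficient a unit times a power of $\lambda$, hence the map is triangular with the right invertibility on ghost components). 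I expect the main obstacle to be handling the denominators: $T'_{\aa}$ and the recursions relating it to $T_{\aa}$ naturally live over $\ZZ_{(p)}[\ldots][1/\lambda]$ or involve division by powers of $p$, and showing the kernel statement integrally over an arbitrary $A$‑algebra $B$ requires carefully tracking that every division that occurs is legitimate — this is precisely where the hypotheses ``$\nu_k$ divisible by $p$'' and ``$\nu_0=0$'' are consumed, and getting the bookkeeping right (rather than the conceptual structure) will be the delicate part.
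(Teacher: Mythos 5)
Your first inclusion is fine: granting the intertwining relation $F^{(\lambda)}\circ T_{\aa}=T'_{\aa}\circ F^{(\lambda^{p^l})}$ of \cite{MRT2} recalled in Remark~4.6, the containment $\Ker(F^{(\lambda^{p^l})})(B)\subseteq\Ker(F^{(\lambda)}\circ T_{\aa})(B)$ is formal. (The paper obtains it differently, by sending $\vv$ through the Sekiguchi--Suwa identification $\Ker(F^{(\mu)})(B)\simeq\Hom(\widehat{\gg}^{(\mu)},\widehat{\GG}_{m,B})$, $\vv\mapsto E_p(\vv,\mu;X)$, together with the substitution identity $E_p(\vv,\lambda^{p^l};\psi^{(l)}(T))=E_p(T_{\aa}(\vv),\lambda;T)$ of Lemma~4.2; either way this direction is the easy one.)

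The reverse inclusion is the real content, and both of your proposed routes have genuine gaps there. The ghost-component induction cannot succeed over the rings for which this lemma is needed: $B$ is an arbitrary $A$-algebra, and in the application $A=A_n$ is a $\ZZ/(p^n)$-algebra, so the phantom map $\Phi:W(B)\to B^{\NN}$ is not injective; verifying that every ghost component of $F^{(\lambda^{p^l})}(\xx)$ vanishes therefore does not prove $F^{(\lambda^{p^l})}(\xx)=0$. Moreover, ``inverting the recursion'' coming from $T_{\aa}$ forces divisions by $p$ and by $\lambda$, neither of which is a non-zero-divisor in the relevant setting (under $\nu_0=0$ one even has $p^l\lambda=\nu_0\lambda^{p^l}=0$). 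Your alternative route, injectivity of $T'_{\aa}$ on all of $W(B)$, is asserted without an argument; the heuristic you offer (a leading coefficient that is ``a unit times a power of $\lambda$'') fails for the same reason, and the paper only ever establishes the much weaker injectivity of $T'_{\aa}$ on $\Coker(F^{(\lambda^{p^l})})(A)\to\Coker(F^{(\lambda)})(A)$ --- and that as a \emph{consequence} of Theorem~1.4 (Remark~4.6), not as an available input. Note also that the paper states and proves this lemma without the hypotheses ``$\nu_k$ divisible by $p$'' and ``$\nu_0=a_0=0$'', which are only imposed later in the subsection, so an argument consuming them proves less. The idea you are missing is the paper's soft mechanism: if $F^{(\lambda)}\circ T_{\aa}(\vv)=0$, then by Lemmas~4.2 and~4.3 the symmetric $2$-cocycle $F_p(F^{(\lambda^{p^l})}(\vv),\lambda^{p^l};\psi^{(l)}(X),\psi^{(l)}(Y))$ equals $F_p(0,\lambda;X,Y)=1$; since $\psi^{(l)}$ is monic with zero constant term, the substitution $F(X,Y)\mapsto F(\psi^{(l)}(X),\psi^{(l)}(Y))$ is injective on cocycles (Lemma~4.4), whence $F_p(F^{(\lambda^{p^l})}(\vv),\lambda^{p^l};X,Y)=1$; this says exactly that $E_p(\vv,\lambda^{p^l};T)$ is a character of $\widehat{\gg}^{(\lambda^{p^l})}$, and the isomorphism (4.8) then forces $\vv\in\Ker(F^{(\lambda^{p^l})})(B)$. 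Trading the Witt-vector statement for a power-series statement and exploiting the monicity of $\psi^{(l)}$ is the step your proposal does not supply.
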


\begin{proof}
Note that the morphism
\begin{align}
\Ker[F^{(\lambda)}:W(B)\to W(B)]\to \Hom(\widehat{\gg}^{(\lambda)},\widehat{\GG}_{m,B});\ \vv\mapsto E_p(\vv,\lambda;X)
\end{align}
is an isomorphism (\cite[Theorem~2.19.1.]{SS1}). We consider the diagram
\begin{align}
\begin{CD}
\Hom(\widehat{\gg}^{(\lambda^{p^l})},\widehat{\GG}_{m,B}) @>{(\psi^{(l)})^\ast}>> \Hom(\widehat{\gg}^{(\lambda)},\widehat{\GG}_{m,B})\\
@A{\phi_3}AA @A{\phi_4}AA\\
\Ker(F^{(\lambda^{p^l})})(B) @>{T_{\aa}}>> \Ker(F^{(\lambda)})(B),
\end{CD}
\end{align}
where $\phi_3$ and $\phi_4$ are isomorphisms mentioned in (4.8). Take $\vv\in\Ker(F^{(\lambda^{p^l})})(B)$. By Lemma~4.2, we have
\begin{align*}
(\psi^{(l)})^\ast\circ \phi_3(\vv)&=E_p(\vv,\lambda^{p^l};\psi^{(l)}(X))\\
                                  &=E_p(T_{\aa}(\vv),\lambda;X)\in\Hom(\widehat{\gg}^{(\lambda)},\widehat{\GG}_{m,B}).
\end{align*}
Here, since $\phi_4:\Ker(F^{(\lambda)})(B)\xrightarrow{\sim}\Hom(\widehat{\gg}^{(\lambda)},\widehat{\GG}_{m,B})$, we obtain $T_{\aa}(\vv)\in\Ker(F^{(\lambda)})(B)$. Hence
\begin{align*}
T_{\aa}:\Ker(F^{(\lambda^{p^l})})(B) \to \Ker(F^{(\lambda)})(B)
\end{align*}
is well-defined and the diagram (4.9) is commutative. Then, for any $\vv\in\Ker(F^{(\lambda^{p^l})})(B)$, we have $T_{\aa}(\vv)\in\Ker(F^{(\lambda)})(B)$. This means $F^{(\lambda)}\circ T_{\aa}(\vv)=0$ for any $\vv\in\Ker(F^{(\lambda^{p^l})})(B)$. Therefore, we obtain the inclusion $\Ker(F^{(\lambda^{p^l})})(B)\subset\Ker(F^{(\lambda)}\circ T_{\aa})(B)$. To show the reverse inclusion, take $\vv\in\Ker(F^{(\lambda)}\circ T_{\aa})(B)$. Then, since $F^{(\lambda)}\circ T_{\aa}(\vv)=0$, we have the equalities
\begin{align*}
F_p(F^{(\lambda)}\circ T_{\aa}(\vv),\lambda;X,Y)=F_p(F^{(\lambda^{p^l})}(\vv),\lambda^{p^l};\psi^{(l)}(X),\psi^{(l)}(Y))=1.
\end{align*} 
By Lemma~4.4, $F_p(F^{(\lambda^{p^l})}(\vv),\lambda^{p^l};X,Y)=1$ is obtained. This means
\begin{align*}
E_p(\vv,\lambda^{p^l};X)E_p(\vv,\lambda^{p^l};Y)=E_p(\vv,\lambda^{p^l};X+Y+\lambda^{p^l} XY).
\end{align*}
Therefore, since $E_p(\vv,\lambda^{p^l};T)\in\Hom(\widehat{\gg}^{(\lambda^{p^l})},\widehat{\GG}_{m,B})$, we have $\vv\in\Ker(F^{(\lambda^{p^l})})(B)$ by $\phi_3$.
\end{proof}

\begin{remark}
{\rm
Lemma 4.7 has been shown by the author~\cite[Lemma~1]{A2} (and \cite{A3}), by direct calculations. But there, it was used to imply that $a_0$ is not a zero divisor.
}
\end{remark}

\begin{lemma}
We have the short exact sequence:
\begin{align}
\begin{CD}
\Ker(F^{(\lambda^{p^l})})(B) @>{T_{\aa}}>> \Ker(F^{(\lambda)})(B) @>{\pi}>> M_l(B) @>>> 0,
\end{CD}
\end{align}
where $\pi$ is a homomorphism induced by the natural projection $W_A(B) \twoheadrightarrow (W_A/T_{\aa})(B)$.
\end{lemma}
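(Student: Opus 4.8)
The plan is to verify exactness of the claimed sequence at each spot; since $B$ is fixed throughout, everything happens in the category of abelian groups, so the argument is a short diagram chase. First I would check that $\pi$ is well defined as a map into $M_l(B)$: for $\vv\in\Ker(F^{(\lambda)})(B)$ one has $\overline{F^{(\lambda)}}(\overline{\vv})=\overline{F^{(\lambda)}(\vv)}=\overline{0}=0$ in $(W_A/T')(B)$, so the class $\overline{\vv}$ of $\vv$ in $(W_A/T_{\aa})(B)$ lies in $M_l(B)$; hence $\pi\colon\Ker(F^{(\lambda)})(B)\to M_l(B)$, $\vv\mapsto\overline{\vv}$, is a group homomorphism. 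Exactness at $M_l(B)$ is then surjectivity of $\pi$, and exactness at $\Ker(F^{(\lambda)})(B)$ is the identity $\Im(T_{\aa})=\Ker(\pi)$.

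For the exactness at the middle term I would note that $\Ker(\pi)=\Ker(F^{(\lambda)})(B)\cap\Im[T_{\aa}\colon W_A(B)\to W_A(B)]$ by definition of the presheaf cokernel $(W_A/T_{\aa})(B)$. If $\vv\in\Ker(F^{(\lambda^{p^l})})(B)$, then Lemma~4.7 gives $\vv\in\Ker(F^{(\lambda)}\circ T_{\aa})(B)$, whence $T_{\aa}(\vv)\in\Ker(F^{(\lambda)})(B)$ and, trivially, $T_{\aa}(\vv)\in\Im(T_{\aa})$; so $T_{\aa}(\Ker(F^{(\lambda^{p^l})})(B))\subseteq\Ker(\pi)$. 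Conversely, if $\ww\in\Ker(\pi)$, write $\ww=T_{\aa}(\vv)$ with $\vv\in W_A(B)$; from $F^{(\lambda)}(\ww)=0$ we get $F^{(\lambda)}\circ T_{\aa}(\vv)=0$, i.e.\ $\vv\in\Ker(F^{(\lambda)}\circ T_{\aa})(B)=\Ker(F^{(\lambda^{p^l})})(B)$ by Lemma~4.7 once more, so $\ww\in T_{\aa}(\Ker(F^{(\lambda^{p^l})})(B))$. This proves $\Ker(\pi)=\Im(T_{\aa})$.

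For surjectivity of $\pi$ onto $M_l(B)$, I would take an element of $M_l(B)$, i.e.\ a class in $(W_A/T_{\aa})(B)$ annihilated by $\overline{F^{(\lambda)}}$, and lift it to $\xx\in W_A(B)$. The condition $\overline{F^{(\lambda)}(\xx)}=0$ in $(W_A/T')(B)$ says $F^{(\lambda)}(\xx)\in\Im[T'\colon W_A(B)\to W_A(B)]=\Im(F^{(\lambda)}\circ T_{\aa})$, so $F^{(\lambda)}(\xx)=F^{(\lambda)}(T_{\aa}(\yy))$ for some $\yy\in W_A(B)$, using $T'=F^{(\lambda)}\circ T_{\aa}$. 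Since $F^{(\lambda)}=F-[\lambda^{p-1}]$ is an additive endomorphism of $W_A(B)$, this forces $\xx-T_{\aa}(\yy)\in\Ker(F^{(\lambda)})(B)$, and then $\pi(\xx-T_{\aa}(\yy))=\overline{\xx-T_{\aa}(\yy)}=\overline{\xx}$ recovers the chosen element, because $T_{\aa}(\yy)$ maps to $0$ in $(W_A/T_{\aa})(B)$. Hence $\pi$ is surjective and the sequence is exact.

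I do not anticipate a real obstacle: the statement is precisely what a snake-lemma-type chase yields for the commutative square formed by the two ``two-term complexes'' $T_{\aa}$ and $T'=F^{(\lambda)}\circ T_{\aa}$ with vertical maps $\mathrm{id}$ and $F^{(\lambda)}$. The sole non-formal ingredient is Lemma~4.7 (hence, through it, Theorem~1.4), which is exactly what makes the exactness at the middle term hold; everything else is routine manipulation of abelian groups, with no fppf-sheafification entering at this stage.
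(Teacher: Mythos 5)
Your proof is correct and follows essentially the same route as the paper: both arguments reduce exactness at the middle term to Lemma~4.7 (applied in both directions) and prove surjectivity of $\pi$ by lifting a class of $M_l(B)$ to $\xx\in W_A(B)$ and using $T'=F^{(\lambda)}\circ T_{\aa}$ to replace $\xx$ by an element of $\Ker(F^{(\lambda)})(B)$ in the same fibre. The only (harmless) differences are that you spell out the well-definedness of $\pi$ and of the restriction of $T_{\aa}$ to the kernels, which the paper leaves implicit, and that your closing remark slightly overstates the dependence on Theorem~1.4: Lemma~4.7 rests on the cocycle-level injectivity of Lemma~4.4 rather than on Theorem~1.4 itself.
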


\begin{proof}
We show that $\Im(T_{\aa})=\Ker(\pi)$ and $\Im(\pi)=M_l(B)$. $\Im(T_{\aa})\subset \Ker(\pi)$ is obvious. We prove the reverse inclusion. If $\pi(\xx) = 0 \in M_l(B)\ (\xx\in \Ker(F^{(\lambda)})(B))$, then we have $\xx\in\Im(T_{\aa})$. Hence there exists a $\zz\in W_A(B)$ such that $\xx=T_{\aa} (\zz)$. Then the equalities $F^{(\lambda)}(\xx) = F^{(\lambda)}\circ T_{\aa} (\zz) = 0$ hold. Therefore, by Lemma~4.7, we obtain
\begin{align*}
\zz \in \Ker(F^{(\lambda)} \circ T_{\aa})(B)=\Ker(F^{(\lambda^{p^l})})(B).
\end{align*}
Next, we prove the surjectivity of $\pi$. Take $\xx\in W_A(B)$ such that $\overline{\xx}\in M_l(B)$. This means that $F^{(\lambda)}(\xx)\in\Im(T')(B)$. Hence there exists $\ww\in W_A(B)$ such that $T'(\ww)=F^{(\lambda)}\circ T_{\aa}(\ww)=F^{(\lambda)}(\xx)$. This means that $T_{\aa}(\ww)-\xx\in\Ker(F^{(\lambda)})(B)$. Therefore $\pi(T_{\aa}(\ww)-\xx)=\overline{\xx}$. Hence $\pi$ is surjective.
\end{proof}

Hereafter, we assume that $\nu_0$ and $a_0$ are units or $0$. Then we can use Corollary~1.7. Now, by combining the exact sequences $(1.3),\ (4.10)$ and the isomorphism $(4.8)$, we have the following diagram consisting of exact horizontal lines and vertical isomorphisms except $\phi$:
\begin{align}
\begin{CD}
\Hom(\widehat{\gg}^{(\lambda^{p^l})},\widehat{\GG}_{m,B}) @>{(\psi^{(l)})^\ast}>> \Hom(\widehat{\gg}^{(\lambda)},\widehat{\GG}_{m,B}) @>{(\iota)^\ast}>> \Hom(N_l,\widehat{\GG}_{m,B}) @>>> 1\\
@A{\phi_3}AA @A{\phi_4}AA  @A{\phi}AA @AAA\\
\Ker(F^{(\lambda^{p^l})})(B) @>{T_{\aa}}>> \Ker(F^{(\lambda)})(B) @>{\pi}>> M_l(B) @>>> 0,
\end{CD}
\end{align}
where $\phi$ is the following homomorphism induced from the exact sequence $(1.2)$ and the isomorphism $(4.8)$:
\begin{align*}
\phi:M_l(B) \rightarrow \Hom(N_l,\widehat{\GG}_{m,B});\ \overline{\vv} \mapsto E_p(\overline{\vv},\lambda;\overline{X}):=E_p(\vv,\lambda;\overline{X}).
\end{align*}

\begin{lemma}
The morphism $\phi$ is well-defined.
\end{lemma}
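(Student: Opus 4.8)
The goal is to prove that the map $\phi : M_l(B) \to \Hom(N_l,\widehat{\GG}_{m,B})$, $\overline{\vv}\mapsto E_p(\vv,\lambda;\overline{X})$, does not depend on the choice of the representative $\vv\in W_A(B)$ of the class $\overline{\vv}\in M_l(B)\subset (W_A/T_{\aa})(B)$, so that it defines a genuine group homomorphism. So the plan is as follows.

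First I would check that for $\vv\in W_A(B)$ with $\overline{\vv}\in M_l(B)$, the power series $E_p(\vv,\lambda;X)$ actually restricts to a homomorphism on $N_l$, i.e. that $E_p(\overline{\vv},\lambda;\overline{X})$ makes sense. By definition of $M_l(B)$, the class $\overline{\vv}$ lies in $\Ker\overline{F^{(\lambda)}}$, which means $F^{(\lambda)}(\vv)\in\Im(T')(B)=\Im(F^{(\lambda)}\circ T_{\aa})(B)$; write $F^{(\lambda)}(\vv)=F^{(\lambda)}\circ T_{\aa}(\ww)$ for some $\ww\in W_A(B)$. Then $F^{(\lambda)}(\vv-T_{\aa}(\ww))=0$, so $\vv-T_{\aa}(\ww)\in\Ker(F^{(\lambda)})(B)$ and by the isomorphism $(4.8)$ the series $E_p(\vv-T_{\aa}(\ww),\lambda;X)$ lies in $\Hom(\widehat{\gg}^{(\lambda)},\widehat{\GG}_{m,B})$; hence $E_p(\vv,\lambda;X)$ differs from a genuine homomorphism $\widehat{\gg}^{(\lambda)}\to\widehat{\GG}_{m,B}$ by the factor $E_p(T_{\aa}(\ww),\lambda;X)$, which by Lemma~4.2 equals $E_p(\ww,\lambda^{p^l};\psi^{(l)}(X))$. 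Restricting to $N_l=\Ker(\psi^{(l)})$ kills this factor since $\psi^{(l)}$ is zero there, so $E_p(\overline{\vv},\lambda;\overline{X})\in\Hom(N_l,\widehat{\GG}_{m,B})$ is well-defined as an element of that Hom-group.

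Second, and this is the content of well-definedness, I would show that if $\vv$ and $\vv'$ represent the same class in $M_l(B)$ — equivalently $\vv-\vv'\in\Im(T_{\aa})(B)$, say $\vv-\vv'=T_{\aa}(\zz)$ — then $E_p(\vv,\lambda;\overline{X})=E_p(\vv',\lambda;\overline{X})$ in $\Hom(N_l,\widehat{\GG}_{m,B})$. Using multiplicativity of $E_p(-,\lambda;X)$ in the Witt-vector argument (additivity of $W_A$ transported to multiplication of the series), we have $E_p(\vv,\lambda;X)=E_p(\vv',\lambda;X)\cdot E_p(T_{\aa}(\zz),\lambda;X)$, and by Lemma~4.2 the last factor is $E_p(\zz,\lambda^{p^l};\psi^{(l)}(X))\in B[[\psi^{(l)}(X)]]$. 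Since $\psi^{(l)}$ vanishes on $N_l$ — i.e. the image of $\psi^{(l)}(X)$ under $\mathcal{O}(\widehat{\gg}^{(\lambda)})\twoheadrightarrow\mathcal{O}(N_l)$ is zero — this factor restricts to the constant $1$ on $N_l$, so $E_p(\vv,\lambda;\overline{X})=E_p(\vv',\lambda;\overline{X})$. Finally I would note that $\phi$ is a group homomorphism: this is again immediate from $E_p(\vv+\vv',\lambda;X)=E_p(\vv,\lambda;X)E_p(\vv',\lambda;X)$, compatibly with the group law $\overline{X}\cdot\overline{Y}=\overline{X}+\overline{Y}+\lambda\overline{X}\overline{Y}$ on $N_l$ when one checks the cocycle-triviality via $F_p(F^{(\lambda)}(\vv-T_{\aa}(\ww)),\lambda;X,Y)$.

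The only real subtlety — the step I expect to be the main obstacle — is making precise that restriction to $N_l$ annihilates the "correction" series $E_p(\ww,\lambda^{p^l};\psi^{(l)}(X))$. One must observe that $N_l$ is by construction $\Ker(\psi^{(l)})$, so $\mathcal{O}(N_l)=\mathcal{O}(\widehat{\gg}^{(\lambda)})/J$ where $J$ is the ideal generated by the components of $\psi^{(l)}(X)$ pulled back from $\widehat{\gg}^{(\lambda^{p^l})}$ (concretely $\psi^{(l)}(X)\equiv 0$); then any element of $B[[\psi^{(l)}(X)]]$ with zero constant term maps to $0$ in $\mathcal{O}(N_l)$, so a unit of the form $1+(\text{such an element})$ maps to $1$. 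Once this is spelled out, combining it with Lemma~4.2 and the multiplicativity of $E_p$ gives both well-definedness and the homomorphism property with no further computation.
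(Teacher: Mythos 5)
Your proof is correct and uses the same mechanism as the paper: the multiplicativity of $\vv\mapsto E_p(\vv,\lambda;X)$, the identity $E_p(T_{\aa}(\zz),\lambda;X)=E_p(\zz,\lambda^{p^l};\psi^{(l)}(X))$ from Lemma~4.2, and the vanishing of $\psi^{(l)}(X)$ in $\mathcal{O}(N_l)$. You are somewhat more thorough than the paper, which (via the surjection $\pi$ of Lemma~4.9) only considers representatives $\vv\in\Ker(F^{(\lambda)})(B)$ altered by $T_{\aa}(\uu)$ with $\uu\in\Ker(F^{(\lambda^{p^l})})(B)$, whereas you also verify that an arbitrary lift in $W_A(B)$ still restricts to a homomorphism on $N_l$; both arguments are fine.
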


\begin{proof}
For $\overline{\vv} \in M_l(B)$, we choose an inverse image $\vv+T_{\aa}(\uu) \in W(B)$, where $\vv \in \Ker(F^{(\lambda)})(B)$ and $\uu\in \Ker(F^{(\lambda^{p^l})})(B)$. By Lemma~4.2, we have
\begin{align*}
E_p(\overline{\vv},\lambda;X)=E_p(\vv,\lambda;X)\cdot E_p(T_{\aa}(\uu),\lambda;X)
                      &= E_p(\vv,\lambda;X)\cdot E_p(\uu,\lambda;\psi^{(l)}(X))\\
                      &\equiv E_p(\vv,\lambda;X)\ \mbox{($\mod\ \psi^{(l)}(X)$)}.
\end{align*}
This means that $\phi$ is well-defined.
\end{proof}

If the diagram $(4.11)$ is commutative, by using the five lemma, $\phi$ becomes an isomorphism, i.e., $M_l(B) \simeq \Hom(N_l,\widehat{\GG}_{m,B})$.

\begin{lemma}
The diagram {\rm (4.11)} is commutative. Hence $\phi$ is an isomorphism.
\end{lemma}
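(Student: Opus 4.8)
The plan is to prove commutativity of the diagram (4.11) square by square, then invoke the five lemma together with the earlier identifications to conclude that $\phi$ is an isomorphism. The leftmost square, involving $\phi_3$, $\phi_4$, $(\psi^{(l)})^\ast$ and $T_{\aa}$, was already checked to be commutative in the proof of Lemma~4.7: there we saw $(\psi^{(l)})^\ast\circ\phi_3(\vv)=E_p(\vv,\lambda^{p^l};\psi^{(l)}(X))=E_p(T_{\aa}(\vv),\lambda;X)=\phi_4\circ T_{\aa}(\vv)$ by Lemma~4.2. So that square needs no new work. The rightmost square is the obvious one: both $M_l(B)\to\Hom(N_l,\widehat{\GG}_{m,B})\to 1$ and $\Ker(F^{(\lambda)})(B)\to M_l(B)\to 0$ are exact, and the vertical map $\Ker(F^{(\lambda)})(B)\to\Hom(N_l,\widehat{\GG}_{m,B})$ is $\phi\circ\pi$ by construction; so commutativity there is immediate from the definition of $\phi$ via the projection $\pi$.

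Hence the real content is the middle square: I must show $\phi\circ\pi=(\iota)^\ast\circ\phi_4$ as maps $\Ker(F^{(\lambda)})(B)\to\Hom(N_l,\widehat{\GG}_{m,B})$. First I would take $\vv\in\Ker(F^{(\lambda)})(B)$. Going one way, $\phi_4(\vv)=E_p(\vv,\lambda;X)\in\Hom(\widehat{\gg}^{(\lambda)},\widehat{\GG}_{m,B})$ by (4.8), and then $(\iota)^\ast$ restricts this homomorphism along $\iota:N_l\hookrightarrow\widehat{\gg}^{(\lambda)}$, which on coordinate rings is reduction modulo the ideal generated by $\psi^{(l)}(X)$ (since $N_l=\Ker(\psi^{(l)})$ and $\mathcal{O}(\widehat{\gg}^{(\lambda)})=B[[X]]$); so $(\iota)^\ast\circ\phi_4(\vv)=E_p(\vv,\lambda;\overline{X})$. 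Going the other way, $\pi(\vv)=\overline{\vv}\in M_l(B)$ and $\phi(\overline{\vv})=E_p(\overline{\vv},\lambda;\overline{X})$, which by the very definition of $\phi$ is $E_p(\vv,\lambda;\overline{X})$ for the chosen representative. These agree, so the square commutes. Here I should remark that the well-definedness of this common value — independence of the representative $\vv$ modulo $\Im(T_{\aa})$ — is exactly what Lemma~4.12 established, using that $E_p(T_{\aa}(\uu),\lambda;X)=E_p(\uu,\lambda;\psi^{(l)}(X))\equiv 1\pmod{\psi^{(l)}(X)}$.

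With all three squares commutative, the rows of (4.11) are exact (the bottom from Lemma~4.11, the top from Corollary~1.7, which applies precisely because we have assumed $\nu_0$ is a unit or $0$), and the maps $\phi_3,\phi_4$ are isomorphisms by (4.8) while the rightmost vertical map is $1\to 0$ hence trivially an isomorphism of the terminal objects. Extending the bottom row on the left by $0\to 0$ and the top row by $1\to 1$ to match widths, the five lemma (for group-valued presheaves, applied section-wise over each $A$-algebra $B$) yields that $\phi:M_l(B)\to\Hom(N_l,\widehat{\GG}_{m,B})$ is an isomorphism, functorially in $B$. I do not expect a serious obstacle: the only subtle point is making sure the restriction map $(\iota)^\ast$ really is reduction modulo $(\psi^{(l)}(X))$, which follows from the explicit description of $N_l$ as the kernel of $\psi^{(l)}$ acting on $B[[X]]$, together with the analogous identification used throughout Section~4 for the coordinate rings of the formal completions.
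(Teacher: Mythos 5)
Your proof is correct and takes essentially the same route as the paper's: the left square is handled by the identity $E_p(\uu,\lambda^{p^l};\psi^{(l)}(X))=E_p(T_{\aa}(\uu),\lambda;X)$ already established in the proof of Lemma~4.7 (via Lemma~4.2), the middle square follows from the definition of $\phi$ (whose well-definedness is the paper's Lemma~4.10, not ``Lemma~4.12'' as you cite), the right square is trivial, and the five lemma then gives that $\phi$ is an isomorphism. Your write-up merely spells out the middle square --- identifying $(\iota)^\ast$ with reduction modulo $(\psi^{(l)}(X))$ --- in more detail than the paper, which simply says it follows from the definition of $\phi$.
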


\begin{proof}
For $(\psi^{(l)})^\ast \circ \phi_1 = \phi_2 \circ T_{\aa}$, we must show the equality
\begin{align*}
E_p(\uu,\lambda^{p^l};\psi^{(l)}(X)) = E_p(T_{\aa}(\uu),\lambda;X),
\end{align*}
where $\uu\in \Ker(F^{(\lambda^{p^l})})(B)$. This is already shown in proof of Lemma~4.7. The equality of $(\iota)^\ast \circ \phi_2 = \phi \circ \pi$ follows from the definition of $\phi$. It is obvious that the last square is commutative.
\end{proof}

\begin{remark}
{\rm
Note that Theorem~1.4 plays a decisive role in making the commutative diagram (4.11) holds. This is the missing part in \cite{A2}.
}
\end{remark}

By the above arguments, we obtain $M_l(B) \simeq \Hom(N_l,\widehat{\GG}_{m,B})$ for any $A$-algebra $B$. Note that, in general, $N_l$ is a formal group scheme, not a finite group scheme. In order for $N_l$ to be a finite group scheme, we need the following Lemma.

\begin{lemma}
Let $A_n$ be a $\ZZ/(p^n)$-algebra and $\nu_k$'s be divided by $p$. Then
\begin{align*}
N_l=\Ker[\psi^{(l)}:\widehat{\gg}^{(\lambda)}\to\widehat{\gg}^{(\lambda^{p^l})}]
\end{align*}
is a finite group scheme of order $p^l$.
\end{lemma}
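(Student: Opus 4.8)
The plan is to show that $N_l=\Ker[\psi^{(l)}]$ is represented by a finite free $A_n$-algebra of rank $p^l$. First I would write down the coordinate ring: since $\mathcal{O}(\widehat{\gg}^{(\lambda)})=A_n[[T]]$ and $\psi^{(l)}$ corresponds to the map $A_n[[T]]\to A_n[[T]];\ T\mapsto\psi^{(l)}(T)$, the kernel $N_l$ is the closed formal subscheme defined by the ideal generated by $\psi^{(l)}(T)$, so that
\begin{align*}
\mathcal{O}(N_l)=A_n[[T]]/(\psi^{(l)}(T)).
\end{align*}
Recall $\psi^{(l)}(T)=\sum_{i=1}^{p^l-1}\alpha_i T^i+T^{p^l}$ with $\alpha_i=\binom{p^l}{i}p^{-(l-k_i)}\lambda^{r_i}\nu_{k_i}$. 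The key point is to examine these coefficients modulo $p$: the assumption that each $\nu_k$ is divisible by $p$ means that $\alpha_i=\binom{p^l}{i}p^{-(l-k_i)}\lambda^{r_i}\nu_{k_i}$ lies in $pA_n$ for every $i$ with $k_i<l$, and for the remaining indices $i=p^j$ ($j<l$) where $k_i=j$ and $r_i=0$ one has $\alpha_{p^j}=\binom{p^l}{p^j}p^{-(l-j)}\nu_j$, which is again in $pA_n$ because $\nu_j\in pA_n$ (and $\binom{p^l}{p^j}p^{-(l-j)}$ is a $p$-adic unit times a $p$-integer, in any case an element of $\ZZ_{(p)}$). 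Hence $\psi^{(l)}(T)\equiv T^{p^l}\pmod{pA_n[[T]]}$.

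Next I would invoke the Weierstrass preparation / division theorem. Since $A_n$ is a $\ZZ/(p^n)$-algebra, $p$ is nilpotent in $A_n$, so $A_n$ is a ring in which the ideal $(p)$ is contained in the nilradical; more to the point, $\psi^{(l)}(T)$ is a \emph{distinguished} (Weierstrass) polynomial-type element: its lowest-degree coefficient with a unit is the leading one in degree $p^l$, and all lower coefficients $\alpha_1,\dots,\alpha_{p^l-1}$ are nilpotent (being in $pA_n$, with $p$ nilpotent). By the Weierstrass division theorem for power series rings over such a base (valid because $\psi^{(l)}(T)=T^{p^l}\cdot(\text{unit})$ after the nilpotent lower terms are absorbed, or more carefully because $\psi^{(l)}(T)$ is regular of order $p^l$ in the sense that $\psi^{(l)}(T)\equiv T^{p^l}\bmod(\text{nilpotents})$), every $f\in A_n[[T]]$ can be written uniquely as $f=q\cdot\psi^{(l)}(T)+r$ with $r\in A_n[T]$ of degree $<p^l$. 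Therefore
\begin{align*}
A_n[[T]]/(\psi^{(l)}(T))\ \cong\ A_n[T]/(\psi^{(l)}(T)),
\end{align*}
a free $A_n$-module with basis $1,T,\dots,T^{p^l-1}$, of rank $p^l$. This shows $N_l=\Spec$ of a finite locally free $A_n$-algebra of rank $p^l$, hence $N_l$ is a finite (flat) group scheme of order $p^l$ over $A_n$; the group structure is inherited from $\widehat{\gg}^{(\lambda)}$ as a closed subgroup scheme.

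The main obstacle I anticipate is the careful justification of the Weierstrass division step over a ring that is not a complete local ring — $A_n$ is an arbitrary $\ZZ/(p^n)$-algebra — so the classical statement does not apply verbatim. The remedy is that $\psi^{(l)}(T)$ is monic of degree $p^l$ with all intermediate coefficients nilpotent: one proves division directly by the usual successive-approximation argument, which terminates because the nilpotent coefficients only shift degrees finitely many times before contributing $0$; equivalently, one first reduces mod the nilradical (where the claim is $A_n^{\mathrm{red}}[[T]]/(T^{p^l})\cong A_n^{\mathrm{red}}[T]/(T^{p^l})$, clear) and then lifts the freeness across the nilpotent thickening $A_n^{\mathrm{red}}\rightsquigarrow A_n$ by Nakayama-type / flatness arguments, using that $\psi^{(l)}(T)$ is a nonzerodivisor in $A_n[[T]]$ since its leading term is $1$. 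Once the module $A_n[[T]]/(\psi^{(l)}(T))$ is identified with $\bigoplus_{i=0}^{p^l-1}A_n T^i$, finiteness, flatness, and the order count $p^l$ are all immediate, and the verification that $\mathcal O(N_l)$ carries a Hopf algebra structure making it a subgroup scheme is routine (it is just the quotient Hopf algebra).
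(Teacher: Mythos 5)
Your proposal is correct and follows essentially the same route as the paper: identify $\mathcal{O}(N_l)=A_n[[T]]/(\psi^{(l)}(T))$, observe that the assumption $\nu_k\in pA_n$ together with the nilpotency of $p$ in $A_n$ makes every non-leading coefficient $\alpha_i$ nilpotent, and conclude that the quotient equals the free $A_n$-module $A_n[T]/(\psi^{(l)}(T))$ of rank $p^l$. The paper packages the passage from $A_n[[T]]$ to $A_n[T]$ by noting that $\overline{X}$ is nilpotent in the quotient (so $\Spf=\Spec$) rather than by your Weierstrass-division argument, but the underlying mechanism is the same.
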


\begin{proof}
The coordinate ring $N_l$ is given by $A_n[[X]]/(\psi^{(l)}(X))$. Then we have
\begin{align*}
\overline{X}^{p^l}=-\alpha_1\overline{X}-\alpha_2\overline{X}^2-\cdots-\alpha_{p^l-1}\overline{X}^{p^l-1}\in A_n[[X]]/(\psi^{(l)}(X)).
\end{align*}
Since $\nu_k$'s are nilpotent elements, $\alpha_i$'s are also nilpotent elements. Hence the class $\overline{X}$ is nilpotent element (\cite[Chap.~1, Ex.~2]{AT}). In this case, the kernel of $\psi^{(l)}$ has the equalities
\begin{align*}
N_l=\Ker(\psi^{(l)})=\Spf(A_n[[X]]/(\psi^{(l)}(X)))=\Spec(A_n[X]/(\psi^{(l)}(X))).
\end{align*}
Moreover, since $A_n[X]/(\psi^{(l)}(X))$ is a free module of rank $p^l$, $N_l$ is a finite group scheme of order $p^l$. 
\end{proof}

Let $A_n$ be a $\ZZ/(p^n)$-algebra and $B$ be an $A_n$-algebra. Assume that $\nu_0=0$ and $\nu_k$ is divisible by $p$ for $1\leq k\leq l-1$. Then, since Corollary~1.7 holds over $B$, we can use Lemma~4.11. Hence we get the isomorphism $M_l(B) \simeq \Hom(N_l,\widehat{\GG}_{m,B})$, where $N_l$ is a finite group scheme of order $p^l$ by Lemma~4.13. We denote by $N_l^D$ the Cartier dual of $N_l$. Thus, by the fppf-sheafification, we have
\begin{align*}
\widetilde{N_l^D}=N_l^D\simeq\widetilde{M}_l={\mathrm{Ker}}[ \widetilde{F^{(\lambda)}} : \widetilde{W_{A_n} / T_{\aa}} \rightarrow \widetilde{W_{A_n} / T'} ]
\end{align*}
in the category of sheaves on $A_n$-algebras. Hence we obtain Theorem~1.9.

\section*{acknowledgements}
The author would like to thank Professor Akira Masuoka for his kind advice and suggestions. He sincerely thanks the people of Meisei University for their hospitality. He is also grateful to the referee for a number of suggestions to improve the paper.

\bibliographystyle{amsplain}
\bibliography{ref_amano.bib}

\providecommand{\bysame}{\leavevmode\hbox to3em{\hrulefill}\thinspace}
\providecommand{\MR}{\relax\ifhmode\unskip\space\fi MR }
% \MRhref is called by the amsart/book/proc definition of \MR.
\providecommand{\MRhref}[2]{%
  \href{http://www.ams.org/mathscinet-getitem?mr=#1}{#2}
}
\providecommand{\href}[2]{#2}
\begin{thebibliography}{10}

\bibitem{A1}
M.~Amano, \emph{{\rm On the Cartier duality of certain finite group schemes of
  order $p^n$}}, Tokyo J. Math. \textbf{33} (2010), 117--127.

\bibitem{A2}
M.~Amano, \emph{{\rm On the Cartier duality of certain finite group schemes of
  order $p^n$,~II}}, Tsukuba J. Math. \textbf{37} (2013), no.~2, 259--269.

\bibitem{A3}
\bysame, \emph{{\rm Corrigendum to ``On the Cartier duality of certain finite
  group schemes of order~$p^n$,~II'' [Tsukuba J. Math. 37 (2) (2013)
  259--269]}}, Tsukuba J. Math. \textbf{41} (2017), no.~1, 167--168.

\bibitem{AT}
M.~F. Atiyah and I.~G. Macdonald, \emph{{\rm Introduction to commutative
  algebra}}, Addison-Wesley, Reading, Mass, 1969.

\bibitem{DG}
M.~Demazure and P.~Gabriel, \emph{{\rm Groupes alg\'{e}briques}}, Tome I,\
  Masson-North-Holland, Paris-Amsterdam, 1970.

\bibitem{HZ}
M.~Hazewinkel, \emph{{\rm Formal~groups~and~applications}}, Academic~Press,
  New~York, 1978.

\bibitem{MRT1}
A.~M\'{e}zard, M.~Romagny, and D.~Tossici, \emph{{\rm Models of group schemes
  of roots of unity}}, Ann. Inst. Fourier, Grenoble \textbf{63} (2013), no.~3,
  1055--1135.

\bibitem{MRT2}
\bysame, \emph{{\rm Sekiguchi-Suwa theory revisited}}, Journal de Th\'{e}orie
  des Nombres de Bordeaux \textbf{26} (2014), 163--200.

\bibitem{SOS}
F.~Oort, T.~Sekiguchi, and N.~Suwa, \emph{{\rm On the deformation of
  Artin-Schreier to Kummer}}, Ann. Sci. \'{E}cole Norm. Sup. \textbf{22}
  (1989), no.~4, 345--375.

\bibitem{SS2}
T.~Sekiguchi and N.~Suwa, \emph{{\rm On the unified Kummer-Artin-Schreier-Witt
  theory}}, Pr\'{e}publication No.~111, Universit\'{e} de Bordeaux (1999).

\bibitem{SS1}
\bysame, \emph{{\rm A note on extensions of algebraic and formal groups IV}},
  Tohoku Math. J \textbf{53} (2001), 203--240.

\bibitem{To}
D.~Tossici, \emph{{\rm Models of $\mu_{p^2,K}$ over discrete valuation ring}},
  Journal of Algebra \textbf{323} (2010), 1908--1957.

\bibitem{W}
W.~Waterhouse, \emph{{\rm A Unified Kummer-Artin-Schreier Sequence}},
  Mathematische Annalen \textbf{277} (1987), 447--452.

\bibitem{WW}
W.~Waterhouse and B.~Weisfeiler, \emph{{\rm One-dimensional affine group
  schemes}}, Journal of Algebra \textbf{66} (1980), 555--568.

\end{thebibliography}

\end{document}